\newtheorem{theorem}{Theorem}[section]
\newtheorem{corollary}[theorem]{Corollary}
\newtheorem{lemma}[theorem]{Lemma}
\theoremstyle{definition}
\newtheorem{definition}[theorem]{Definition}
\theoremstyle{remark}
\newtheorem{remark}[theorem]{Remark}
\numberwithin{equation}{section}
\begin{document}


\title[Lefschetz fibration structures on knot surgery
       $4$-manifolds]{Lefschetz fibration structures on knot surgery $4$-manifolds}

\author{Jongil Park}
\address{Department of mathematical sciences,
  Seoul National University, 599 Gwanak-ro, Gwanak-gu, Seoul 151-747,
  Republic of Korea}
\email{jipark@snu.ac.kr}

\author{ Ki-Heon Yun}
\address{Department of Mathematics, Sungshin Women's University,
  54-7 Dongseondong-gil, Seongbuk-gu, Seoul 136-742, Republic of Korea}
\email{kyun@sungshin.ac.kr}%

\subjclass[2000]{57N13, 57R17, 53D35}%

\keywords{Kanenobu knot, knot surgery $4$-manifold, Lefschetz fibration,
          monodromy factorization}

\date{June 27, 2009}

\begin{abstract}
 In this article we study Lefschetz fibration structures on knot surgery $4$-manifolds obtained from an elliptic surface $E(2)$ using Kanenobu knots $K$. As a result, we get an infinite family of simply connected mutually diffeomorphic $4$-manifolds coming from  a pair of inequivalent Kanenobu knots. We also obtain an infinite family of simply connected symplectic $4$-manifolds, each of which admits more than one inequivalent Lefschetz fibration structures of the same generic fiber.
\end{abstract}

\maketitle

\section{Introduction}
\label{section:intro}

 Since Seiberg-Witten theory was introduced in 1994,
 many techniques in $4$-dimensional topology have been developed to show
 that a large class of simply connected smooth $4$-manifolds admit
 infinitely many distinct smooth structures.
 Among them, a knot surgery technique introduced by R.~Fintushel and
 R.~Stern turned out to be one of the most powerful tools changing
 the smooth structure on a given $4$-manifold~\cite{FS:98}.
 The knot surgery construction is  following:  Suppose that
 $X$ is a simply connected smooth $4$-manifold containing an embedded
 torus $T$ of square $0$. Then, for any knot $K \subset S^3$,
 one can construct a new $4$-manifold,
 called {\em a knot surgery $4$-manifold},
 \begin{equation*}
  X_K = X\sharp_{T=T_m} (S^1\times M_K)
 \end{equation*}
 by taking a fiber sum along a torus $T$ in $X$ and $T_m = S^1\times m$ in
 \mbox{$S^1 \times M_K$}, where $M_K$ is the $3$-manifold obtained by doing
 $0$-framed surgery along $K$ and $m$ is the meridian of $K$.
 Then Fintushel and Stern proved that, under a mild condition on $X$
 and $T$, the knot surgery $4$-manifold $X_K$ is homeomorphic,
 but not diffeomorphic, to a given $X$~\cite{FS:98}.
 Furthermore, if $X$ is a simply connected elliptic surface $E(2)$,
 $T$ is the elliptic fiber, and $K$ is a fibred knot,
 then it is also known that the knot surgery $4$-manifold $E(2)_K$ admits
 not only a symplectic structure but also a genus $2g(K)+1$ Lefschetz
 fibration structure~\cite{FS:2004, Yun:08}.
 Note that there are only two inequivalent genus one fibred knots, but there are infinitely many inequivalent genus $g$ fibred knots for $g \ge 2$. So one may dig out some interesting properties of $E(2)_K$ by carefully investigating genus two fibred knots and related Lefschetz fibration structures.

 On the one hand, Fintushel and Stern~\cite{FS_ICM98} conjectured that the set of all knot surgery $4$-manifolds of the form $E(2)_K$ up to diffeomorphism is one-to-one correspondence with the set of all knots in $S^3$ up to knot equivalence. Some progresses related to the conjecture were obtained by S. Akbulut~\cite{Akbulut:02} and M. Akaho~\cite{Akaho:06}. But a complete answer to the conjecture for prime knots up to mirror image is not known yet.
 Furthermore, Fintushel and Stern~\cite{FS:2004} also questioned whether any two in the following $4$-manifolds
 \[
  \{ Y(2; K_1, K_2):= E(2)_{K_1} \sharp_{id:\Sigma_{2g+1} \to \Sigma_{2g+1}} E(2)_{K_2} \ |\ K_1, K_2 \ \mathrm{are\ genus\ g\ fibred\ knots} \}
\]
 are mutually diffeomorphic or not. The second author obtained a partial result related to this question under the constraint that one of $K_i$ $(i\!=\!1,2)$ is fixed~\cite{Yun:08}.

 In this article we investigate Lefschetz fibration structures on the knot surgery $4$-manifold $E(2)_K$, where $K$ ranges a family of Kanenobu knots.
 Remind that Kanenobu~\cite{Kanenobu:86,Kanenobu:86a} found an interesting family of inequivalent genus $2$ fibred prime knots
\[
 \{ K_{p,q} \ |\ (p,q) \in \mathcal{R} \ \} \textrm{\ and \ } \mathcal{R}
 =\{ (p,q) \in \mathbb{Z}^2\ | \ p \in \mathbb{Z}^+, \ -p \le q \le p \},
\]
 where any two of them are not in mirror relation and all of them have the same Alexander polynomials.
 In Section~\ref{section:isom} we consider the following family of simply connected symplectic $4$-manifolds which have the same Seiberg-Witten invariants
\[
  \{ Y(2; K_{p,q}, K_{r,s}):= E(2)_{K_{p,q}} \sharp_{id:\Sigma_5 \to \Sigma_5} E(2)_{K_{r,s}} \ |\ (p,q), (r,s) \in \mathcal{R} \}.
\]
 By investigating the monodromy factorization expression corresponding to Lefschetz fibration structure on $Y(2; K_{p,q}, K_{r,s})$, we answer the question raised in~\cite{FS:2004}.

\begin{theorem}
\label{mainthm-1}
 Any two simply connected symplectic $4$-manifolds in
 \[
  \{ Y(2; K_{p,q}, K_{p+1,q})\ | \ p, q \in \mathbb{Z} \}
  \cup \{ Y(2;K_{p,q}, K_{p,q+1}) \ | \ p, q \in \mathbb{Z} \}
 \]
 are mutually diffeomorphic.
\end{theorem}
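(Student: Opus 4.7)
The plan is to realize each $Y(2; K_{p,q}, K_{r,s})$ as a genus $5$ Lefschetz fibration over $S^2$, obtained as the twisted fiber sum of the two genus $5$ Lefschetz fibrations $E(2)_{K_{p,q}} \to S^2$ and $E(2)_{K_{r,s}} \to S^2$ described in~\cite{FS:2004, Yun:08}. By Kas's theorem, two such Lefschetz fibrations are diffeomorphic whenever their monodromy factorizations in the mapping class group of $\Sigma_5$ agree up to global conjugation and Hurwitz equivalence, so my goal reduces to establishing such an equivalence.

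First I would write out the monodromy factorization of $E(2)_{K_{p,q}}$ by combining the standard factorization of $E(2)$ with the monodromy $\phi_{p,q}$ of the fibered Kanenobu knot $K_{p,q}$, viewed as an element of the mapping class group of a genus $2$ surface with boundary. Since the knots $K_{p,q}$ are built from one common template by inserting $p$ and $q$ full twists into two specified bands, the differences $\phi_{p+1,q}\phi_{p,q}^{-1}$ and $\phi_{p,q+1}\phi_{p,q}^{-1}$ are Dehn twists $t_\alpha$ and $t_\beta$ about simple closed curves on the Seifert surface. Translated through the Fintushel--Stern construction, this means that the monodromy factorization of $E(2)_{K_{p+1,q}}$ differs from that of $E(2)_{K_{p,q}}$ by insertion of a canceling pair $t_{\widetilde{\alpha}}\, t_{\widetilde{\alpha}}^{-1}$ about curves $\widetilde{\alpha}$ sitting on the generic fiber $\Sigma_5$, and similarly for the $q$-direction.

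Second, in the monodromy factorization of $Y(2; K_{p,q}, K_{p+1,q})$, which is the concatenation of the factorizations of the two summands, I would use Hurwitz moves together with the conjugation freedom provided by the identity gluing to transport the extra twists coming from the $K_{p+1,q}$-side across the concatenation point. Because the curves $\widetilde{\alpha}$ (resp.\ $\widetilde{\beta}$) produced by the Kanenobu twist boxes lie on $\Sigma_5$ and are identified under the identity gluing with their counterparts on the opposite side, transferring one such twist converts the factorization for $Y(2; K_{p,q}, K_{p+1,q})$ into that for $Y(2; K_{p-1,q}, K_{p,q})$, and an induction on $p$ (respectively on $q$, using $\beta$ in place of $\alpha$) then covers every pair listed in the theorem.

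The principal obstacle is the explicit identification, on the generic fiber $\Sigma_5$, of the Dehn twist curves arising from the Kanenobu twist boxes, and the verification that, under the identity gluing of the fiber sum, each such curve on one side is genuinely isotopic to its counterpart on the other side. This requires a careful geometric picture of how the Seifert surface of $K_{p,q}$ embeds into $\Sigma_5$ after knot surgery and of how the monodromy factorization records the twist operations. Once these curve identifications are in hand, the required sequence of Hurwitz moves and partial conjugations becomes routine combinatorial bookkeeping in the mapping class group.
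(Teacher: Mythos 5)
Your overall strategy --- compare monodromy factorizations of the twisted fiber sums and move the ``extra'' twist distinguishing $K_{p+1,q}$ from $K_{p,q}$ across the factorization by Hurwitz moves --- is the same as the paper's, and your observation that $\Phi_{K_{p+1,q}}\circ\Phi_{K_{p,q}}^{-1}$ is a single Dehn twist (about $t_d^q(c_2)$) is correct. But the step where you actually perform the transfer is where the real content lies, and as described it does not work. The factorization of $E(2)_{K_{p+1,q}}$ is obtained from that of $E(2)_{K_{p,q}}$ by \emph{conjugating} the subword $\Phi_{K_{p,q}}(\eta_{1,2}^2)$ by $t_{c_2}$ (suitably conjugated), not by inserting a canceling pair $t_{\widetilde\alpha}t_{\widetilde\alpha}^{-1}$; inserting such a pair is not a Hurwitz move and would destroy positivity of the factorization. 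A partial conjugation of a subword by an element $g$ is a Hurwitz equivalence only if $g$ is appropriately generated by the Dehn twists appearing in the rest of the word. The paper's key technical input (Lemma~\ref{lemma:key-p}) is precisely that $t_{c_2}$ can be written as a product of conjugates of twists drawn one from $\Phi_{K_{p,q}}(\eta_{1,2}^2)$ and one from $\Phi_{K_{p+1,q}}(\eta_{1,2}^2)$ (and likewise from $\eta_{1,2}^2\cdot t_{c_2}(\eta_{1,2}^2)$), so that $t_{c_2}\in G_F(\xi_{p,q}\cdot\xi_{p+1,q})$. This is subtle and genuinely needs both blocks of the fiber sum: Lemma~\ref{lemma:pq} shows $t_{c_2},t_d\notin G_F(\xi_{p,q})$, so the conjugation cannot be carried out inside a single $E(2)_{K_{p,q}}$ summand. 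Your proposal treats this as ``routine combinatorial bookkeeping,'' but it is the heart of the proof and must be established by an explicit identity such as $t_{c_2}=\lambda_{\Phi_{K_{k,q}}(t_{B_3})}(\Phi_{K_{k+1,q}}(t_{B_3}))$.

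There is a second, independent gap. Even granting the transfer mechanism, your inductions only show that for each \emph{fixed} $q$ the manifolds $Y(2;K_{p,q},K_{p+1,q})$ form a single diffeomorphism class, and for each fixed $p$ the manifolds $Y(2;K_{p,q},K_{p,q+1})$ form a single class. This leaves a one-parameter family of potentially distinct classes in each of the two families and does not connect the $p$-family with the $q$-family, whereas the theorem asserts that the entire union is one class. The paper closes this using Kanenobu's knot equivalence $K_{p,q}\sim K_{q,p}$ (Lemma~\ref{lemma:kanenobu}(5)), which gives $Y(2;K_{p,p},K_{p+1,p})\approx Y(2;K_{p,p},K_{p,p+1})$ and lets one hop along the diagonal between the two families and across different values of the fixed parameter. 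Without this (or some substitute) your argument proves a strictly weaker statement.
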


 In section~\ref{section:nonisom} we also study nonisomorphic Lefschetz fibration structures on simply connected symplectic $4$-manifolds which
  share the same Seiberg-Witten invariants. Let $\xi_{p,q}$ be a genus five
 Lefschetz fibration structure on $E(2)_{K_{p,q}}$.
 Then, by investigating  the monodromy group $G_F(\xi_{p,q})$ of $\xi_{p,q}$, we get the following theorem.

\begin{theorem}
\label{mainthm-2}
 $\xi_{p,q}$ is not equivalent to $\xi_{r,s}$ if $(p,q) \not\equiv (r,s) \pmod 2$.
\end{theorem}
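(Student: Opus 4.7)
The plan is to use the monodromy group $G_F(\xi_{p,q}) \le \mathrm{MCG}(\Sigma_5)$ as an invariant of the Lefschetz fibration equivalence class. Since equivalent fibrations have conjugate monodromy groups in $\mathrm{MCG}(\Sigma_5)$, it suffices to show that when $(p,q) \not\equiv (r,s) \pmod{2}$ the subgroups $G_F(\xi_{p,q})$ and $G_F(\xi_{r,s})$ are not conjugate.

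First, I would produce an explicit monodromy factorization of $\xi_{p,q}$ on $\Sigma_5$. Kanenobu's construction realizes $K_{p,q}$ via two independent full-twist operations, with integer parameters $p$ and $q$, on a fixed genus two Seifert surface along prescribed twisting disks. Pushing this through the knot-surgery recipe of \cite{FS:2004, Yun:08} yields a factorization whose vanishing cycles consist of a fixed collection of curves (coming from $E(2)$ and from the base fibred knot that underlies the Kanenobu family) together with images of reference curves under a Dehn-twist word of the form $T_{a}^{p} T_{b}^{q}$, for specific simple closed curves $a,b$ on $\Sigma_5$ associated to the twisting disks.

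Second, to convert the parity information into a conjugacy invariant, I would compose with a representation in which even powers of Dehn twists become trivial. The natural choice is the mod-$2$ symplectic representation $\rho : \mathrm{MCG}(\Sigma_5) \to \mathrm{Sp}(10, \mathbb{Z}/2)$, under which a Dehn twist $T_{c}$ maps to the transvection along $[c] \in H_1(\Sigma_5; \mathbb{Z}/2)$ and any even power of $T_{a}$ maps to the identity. The image $\rho(G_F(\xi_{p,q}))$ is then generated by transvections along the mod-$2$ classes of the vanishing cycles, and these classes depend on $(p,q)$ only through $(p \bmod 2, q \bmod 2)$. Because $\rho(G_F(\xi_{p,q}))$ remains a conjugacy invariant in $\mathrm{Sp}(10, \mathbb{Z}/2)$, the problem reduces to distinguishing the four parity types by showing that they produce non-conjugate subgroups of $\mathrm{Sp}(10, \mathbb{Z}/2)$.

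The main obstacle is this last comparison: verifying that flipping only the parity of $p$ or only the parity of $q$ genuinely changes the $\mathrm{Sp}(10, \mathbb{Z}/2)$-conjugacy class, rather than yielding a merely different generating set for the same subgroup. I would address this by computing, for each candidate subgroup, the set of mod-$2$ homology classes that arise as transvection axes of its generators, exploiting the rigid $E(2)$ portion of the factorization to pin down enough symplectic data to discriminate. The key mechanism is that replacing $p$ by $p+1$ converts a vanishing cycle of the form $T_{a}^{p}(\gamma)$ into $T_a(T_{a}^{p}(\gamma))$, whose mod-$2$ class differs from that of $T_{a}^{p}(\gamma)$ whenever $\gamma$ has nonzero algebraic intersection with $a$ mod $2$, whereas replacing $p$ by $p+2$ leaves every mod-$2$ class fixed. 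Hence the parity pair $(\bar p, \bar q)$ emerges as a complete invariant of $\rho(G_F(\xi_{p,q}))$ up to conjugation, which gives the desired inequivalence.
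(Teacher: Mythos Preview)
Your plan and the paper's proof both work at the level of $H_1(\Sigma_5;\mathbb{Z}/2)$, but there is a genuine gap in the step where you pass to $\mathrm{Sp}(10,\mathbb{Z}/2)$ and try to separate the four parity classes \emph{up to conjugacy}. In fact that separation is impossible. The paper's own Remark records a computer check that $\psi_2(G_F(\xi_{p,q}))$ has order $50030759116800$ for every $(p,q)$, while $|\mathrm{Sp}(10,\mathbb{Z}/2)|=24815256521932800$; the index is $496=2^4(2^5-1)$, which is exactly the number of quadratic forms of Arf invariant $1$ on $\mathbb{F}_2^{10}$. Thus each $\psi_2(G_F(\xi_{p,q}))$ is the full orthogonal group of such a form, and since $\mathrm{Sp}(10,\mathbb{Z}/2)$ acts transitively on these forms, all four images are conjugate. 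So the invariant you propose---the $\mathrm{Sp}(10,\mathbb{Z}/2)$-conjugacy class of $\rho(G_F(\xi_{p,q}))$---does not detect the parity of $(p,q)$. Your fallback suggestion, the ``set of mod-$2$ classes of transvection axes of the generators,'' is not a well-defined invariant either: Hurwitz moves replace a vanishing cycle $c_i$ by $t_{c_{i+1}}(c_i)$, so the set of generator classes is not preserved even before passing to conjugacy.

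What the paper actually does is different in a crucial way. Using Lemma~\ref{lemma:mon-Group} it only needs \emph{equality} of $G_F$ (with a fixed identification of the fiber), not equality up to conjugation. The four quadratic forms that arise are encoded via Humphries' graph invariant $\chi_{\Gamma}$: for each parity type $(\epsilon_p,\epsilon_q)$ the paper constructs an explicit graph $\Gamma_i$ (i.e.\ an explicit Arf-$1$ quadratic form $q_i$) such that every vanishing cycle of $\xi_{p,q}$ satisfies $\chi_{\Gamma_i}=1$, hence $G_F(\xi_{p,q})\le G_{\Gamma_i,5}$. It then checks, for each other parity type, that some vanishing cycle $\Phi_{K_{r,s}}(B_j)$ has $\chi_{\Gamma_i}=0$, so $G_F(\xi_{r,s})\not\le G_{\Gamma_i,5}$ and therefore $G_F(\xi_{p,q})\ne G_F(\xi_{r,s})$. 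In other words, the paper distinguishes the four images in $\mathrm{Sp}(10,\mathbb{Z}/2)$ as \emph{distinct} orthogonal subgroups, not as non-conjugate ones. Your reduction to the symplectic representation is the right first move, but to finish you must pin down the specific quadratic form preserved by each $G_F(\xi_{p,q})$ and show these forms differ; that is precisely the content of the $\chi_{\Gamma_i}$ computations in Lemma~\ref{lemma:pq}.
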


 As a corollary, we can easily recapture a similar result in~\cite{Park-Yun:08}. Remind that we constructed a pair of nonisomorphic Lefschetz fibration structures on $E(n)_K$ for a special type of $2$-bridge knot $K$ in~\cite{Park-Yun:08}. Theorem~\ref{mainthm-2} above also confirms such a phenomena.
 That is, for any $(p, q) \in \mathbb{Z}^2$ with $p \not\equiv q$ (mod $2$),
 $K_{p,q}$ is equivalent to $K_{q,p}$ and therefore $E(2)_{K_{p,q}}$ is the same symplectic $4$-manifold as $E(2)_{K_{q,p}}$. But the theorem above implies that the corresponding Lefschetz fibration structures $\xi_{p,q}$ and $\xi_{q,p}$ are not equivalent.

\subsection*{Acknowledgment}
 Jongil Park holds a joint appointment at KIAS and
 in the Research Institute of Mathematics, SNU. Ki-Heon Yun was supported by Sungshin Women's University Research Grant of 2008.


\bigskip

\section{Preliminaries}\label{section:prelim}

 In this section we briefly review some well-known facts about Lefschetz fibrations on $4$-manifolds and surface mapping class groups (refer to~\cite{GS:99} for details).

\begin{definition}
 \label{defn:lefschetz}
 Let $X$ be a compact, oriented smooth 4-manifold. A
 Lefschetz fibration is a proper smooth map $\pi : X \to B$, where $B$
 is a compact connected oriented surface and $\pi^{-1}(\partial B) =
 \partial X$ such that
\begin{itemize}
 \item[(1)]  the set of critical points $C= \{p_1, p_2, \cdots, p_n\}$ of $\pi$
        is non-empty and lies in $int(X)$ and $\pi$ is injective on $C$
 \item[(2)]  for each $p_i$ and $b_i:=\pi(p_i)$, there are local complex
       coordinate charts agreeing with the orientations of $X$ and $B$ such that
        $\pi$ can be expressed as $\pi(z_1, z_2) = z_1^2 + z_2^2$.
\end{itemize}
\end{definition}

 It is known that there is one-to-one correspondence between the set of symplectic Lefschetz fibrations over $S^2$ and the set of factorizations of the identity elements in the mapping class group as a product of right-handed Dehn twists up to Hurwitz moves and global conjugation~\cite{GS:99, Kas:80, Matsumoto:96}. Therefore a monodromy factorization of a given Lefschetz fibration has lots of information about the underlying symplectic $4$-manifold.

 Two monodromy factorizations $W_1$ and $W_2$ are called \emph{Hurwitz equivalence} if $W_1$ can be changed to $W_2$ in finitely many steps of the following two operations:
\begin{itemize}
 \item[(1)] \emph{Hurwitz move:}
       $t_{c_n}\cdot ... \cdot t_{c_{i+1}} \cdot t_{c_i}
       \cdot ... \cdot t_{c_1} \sim  t_{c_n}\cdot ... \cdot t_{c_{i+1}}(t_{c_i})
       \cdot t_{ c_{i+1}} \cdot ... \cdot t_{c_1}$
 \item[(2)] \emph{inverse Hurwitz move:}
       $t_{c_n}\cdot ... \cdot t_{c_{i+1}} \cdot
       t_{ c_i} \cdot ... \cdot t_{c_1} \sim  t_{c_n}\cdot ...  \cdot t_{ c_{i}}
       \cdot t_{c_{i}}^{-1}(t_{c_{i+1}}) \cdot ... \cdot t_{c_1}$
\end{itemize}
 where $t_a(t_b) = t_{t_a(b)}$ and it is $t_a \circ t_b \circ t_a^{-1}$ as an element of mapping class group.  This relation comes from the choice of Hurwitz system, a set of mutually disjoint arcs except the base point $b_0$ which connecting $b_0$ to $b_i$.

 A choice of generic fiber also gives another equivalence relation. Two monodromy factorizations $W_1$ and $W_2$ are called \emph{simultaneous conjugation equivalence} if $W_2 = f(W_1)$ for some $f\in \mathcal{M}_g$, where $\Sigma_g$ is a generic fiber of the Lefschetz fibration $W_1$.

 Two Lefschetz fibrations $f_1: X_1\to B_1$, $f_2:  X_2\to B_2$ are
 called \emph{isomorphic} if there are orientation preserving diffeomorphisms $H: X_1\to X_2$ and $h:B_1\to B_2$ such that the following diagram commutes:
 \begin{equation}
 \begin{CD}
    X_1  @>H>> X_2 \\
    @V{f_1}VV    @VV{f_2}V \\
    B_1 @>h>> B_2
 \end{CD}
 \end{equation}

 Monodromy factorizations of two isomorphic Lefschetz fibrations are related by a sequence of Hurwitz equivalences and simultaneous conjugation equivalences.

\medskip

\noindent {\em Notation}.
 We denote by $W_1 \cong W_2$ if two monodromy factorizations $W_1$ and $W_2$ are equivalent. In the case that two manifolds $X_1$ and $X_2$ are diffeomorphic, we denote by $X_1 \approx X_2$.

\begin{definition}\label{defn:monodromy_group}
 Let $\pi : X \to S^2$ be a Lefschetz fibration and let $F$ be a fixed generic fiber of the Lefschetz fibration. Let $W = w_n\cdot ... \cdot w_2 \cdot w_1$ be a monodromy factorization of the Lefschetz fibration corresponding to $F$. Then the \emph{monodromy group} $G_F(W)$ is a subgroup of the mapping class group $\mathcal{M}_F = \pi_0(\textrm{Diff}^+(F))$ generated by $w_1, w_2, \cdots, w_n$. We will write $G(W)$ when the generic fiber $F$ is clear from context. The element $w_n \circ \cdots\circ w_2 \circ w_1$ in $\mathcal{M}_F$ is denoted by $\lambda_W$.
\end{definition}

\begin{lemma}\label{lemma:mon-Group}
 If two monodromy factorizations $W_1$ and $W_2$ give isomorphic Lefschetz fibrations over $S^2$, then monodromy groups $G(W_1)$ and $G(W_2)$ are isomorphic as a subgroup of the mapping class group $\mathcal{M}_F$. 
 Moreover if we fix a generic fiber $F$, then $G_F(W_1) = G_F(W_2)$.
\end{lemma}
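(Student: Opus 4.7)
The plan is to reduce to the two atomic equivalences already recorded in the preliminaries. By the discussion immediately preceding the lemma, two monodromy factorizations coming from isomorphic Lefschetz fibrations over $S^2$ differ by a finite sequence of Hurwitz moves (and their inverses) together with simultaneous conjugations. It therefore suffices to verify that each of these operations preserves the monodromy group in the appropriate sense, and then to chain the resulting equalities/isomorphisms along the sequence.

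First I would treat a Hurwitz move $t_{c_n}\cdots t_{c_{i+1}} t_{c_i} \cdots t_{c_1} \sim t_{c_n}\cdots (t_{c_{i+1}} t_{c_i} t_{c_{i+1}}^{-1})\, t_{c_{i+1}} \cdots t_{c_1}$. The new generating set is obtained from the old one by replacing $t_{c_i}$ with $t_{c_{i+1}} t_{c_i} t_{c_{i+1}}^{-1}$, and the old generator can be recovered as $t_{c_i} = t_{c_{i+1}}^{-1}(t_{c_{i+1}} t_{c_i} t_{c_{i+1}}^{-1})t_{c_{i+1}}$. Hence each generating set lies inside the subgroup of $\mathcal{M}_F$ generated by the other, so the two subgroups are literally equal. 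The inverse Hurwitz move is identical up to relabelling, so Hurwitz equivalence preserves $G_F$ on the nose.

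Next, for a simultaneous conjugation $W_2 = f(W_1)$ with $f \in \mathcal{M}_g$, each Dehn twist appearing in $W_2$ is the $f$-conjugate of the corresponding twist in $W_1$, so $G(W_2) = f\,G(W_1)\,f^{-1}$. As conjugate subgroups of $\mathcal{M}_F$ they are isomorphic (both as abstract groups and via the inner automorphism of $\mathcal{M}_F$), which establishes the first assertion of the lemma. For the second assertion, once a generic fiber $F$ is fixed we may arrange the diffeomorphism $H$ realizing the isomorphism of Lefschetz fibrations to preserve $F$ setwise and to act by the identity on it under the chosen identification; the two resulting monodromy factorizations then differ only by Hurwitz moves, and the preceding paragraph upgrades the conclusion to the equality $G_F(W_1) = G_F(W_2)$.

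The only point that will require a little care is precisely this last step, namely that fixing a generic fiber removes the simultaneous-conjugation ambiguity. This is essentially built into the standard framework relating Lefschetz fibrations to monodromy factorizations, so I expect the verification to be bookkeeping rather than a substantive obstacle.
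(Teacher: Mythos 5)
The paper states this lemma without proof, so there is no authors' argument to compare against; I am judging your proposal on its own terms. Your treatment of the first assertion is correct: a Hurwitz move replaces the generator $t_{c_i}$ by $t_{c_{i+1}}t_{c_i}t_{c_{i+1}}^{-1}$ while keeping $t_{c_{i+1}}$ in the generating tuple, so each generating set lies in the subgroup generated by the other and the monodromy group is unchanged on the nose; and a simultaneous conjugation $W_2=f(W_1)$ gives $G(W_2)=f\,G(W_1)\,f^{-1}$, a conjugate and hence isomorphic subgroup of $\mathcal{M}_F$. Chaining these along the finite sequence of moves yields the first sentence.

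The genuine gap is in your last step. You claim that, once a generic fiber $F$ is fixed, one ``may arrange'' the diffeomorphism $H$ realizing the isomorphism to act by the identity on $F$ under the chosen identification. This is not a free choice. The isotopy class $[H|_F]\in\mathcal{M}_F$ \emph{is} the simultaneous-conjugation element relating the two factorizations, and it is dictated by the given isomorphism: it can only be modified by composing with restrictions to $F$ of fibered self-isomorphisms of the first fibration, and any such restriction $s$ satisfies $s(W_1)\cong W_1$ up to Hurwitz moves, hence normalizes $G_F(W_1)$; so these modifications cannot be used to trivialize an arbitrary $[H|_F]$. If $[H|_F]=f\neq 1$, your argument only yields $G_F(W_2)=f\,G_F(W_1)\,f^{-1}$, which equals $G_F(W_1)$ precisely when $f$ normalizes the monodromy group --- something that is not automatic. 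What your proof actually establishes is that the \emph{conjugacy class} of $G_F(W)$ in $\mathcal{M}_F$ is an invariant of the isomorphism class of the fibration; the literal equality $G_F(W_1)=G_F(W_2)$ is justified only for two Hurwitz systems of one and the same fibration with the same fixed fiber (which your first paragraph does cover). To close the gap one must either prove that the relevant monodromy subgroups are normalized by all admissible $f$, or weaken the conclusion to non-conjugacy and check that the separating criteria used downstream (e.g.\ in the proof of Theorem~\ref{theorem:pq}) are conjugation-invariant.
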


 A monodromy factorization of a Lefschetz fibration structure on $E(n)_K$ was studied by Fintushel and Stern~\cite{FS:2004} and we could find an explicit monodromy factorization of $E(n)_K$~\cite{Yun:08} with the help of factorization of the identity element in the mapping class group which were discovered by Y. Matsumoto~\cite{Matsumoto:96}, M. Korkmaz~\cite{Korkmaz:2001} and Y. Gurtas~\cite{Gurtas:2004}.

\begin{definition}
\label{definition:MD}
 Let $M(n,g)$ be the desingularization of the double cover of $\Sigma_g
 \times S^2$ branched over $2n (\{pt.\}\times S^2 ) \cup 2 (\Sigma_g 
 \times  \{ pt.\})$.
\end{definition}

\begin{figure}[tbp]
 \includegraphics[scale=1]{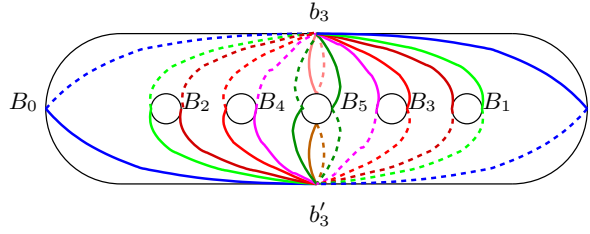}
 \caption{an involution and  its vanishing cycles with $g=2$}\label{fig:generator}
\end{figure}

\begin{lemma}[\cite{Korkmaz:2001,Yun:05}]
\label{lemma:Yun}
 $M(2,g)$ has a monodromy factorization $\eta_{1,g}^2$, where
\[
 \eta_{1,g} = t_{B_0} \cdot t_{B_1} \cdot t_{B_2}\cdot \cdots \cdot t_{B_{2g}} \cdot t_{B_{2g+1}} \cdot t_{b_{g+1}}^2 \cdot t_{b_{g+1}'}^2
\]
 and  $\{B_j, b_{g+1}, b_{g+1}'\}$ are simple closed curves on $\Sigma_{2g+1}$ as in Figure~\ref{fig:generator}.
\end{lemma}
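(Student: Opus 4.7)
The plan is to realize $M(2,g)$ as a genuine Lefschetz fibration over $S^2$ via the composition $M(2,g) \to \Sigma_g\times S^2 \to S^2$ (the second map being projection to the $S^2$ factor) and then read the monodromy factorization off the branched-cover data. First I would verify the generic fiber: away from the two branch points in the $S^2$ factor, the fiber is a double cover of $\Sigma_g$ branched at $2n=4$ points, which by Riemann--Hurwitz has genus $2g+1$, matching the curves drawn on $\Sigma_{2g+1}$ in Figure~\ref{fig:generator}.

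Next I would locate the singular fibers. They can only occur above the two branch values $p_1, p_2 \in S^2$. Above $p_i$, the naive double cover of $\Sigma_g\times\{p_i\}$ is singular exactly at the four intersection points of this vertical branch component with the horizontal branch divisor $4(\{pt\}\times S^2)$. Resolving these singularities, and perturbing the map to put the fibration in general position, replaces each branched fiber by a collection of nodal Lefschetz fibers. By the evident $\mathbb{Z}/2$ symmetry permuting $p_1$ and $p_2$, the two contributions to the monodromy are conjugate copies of the same half-factorization $\eta_{1,g}$, so the total monodromy automatically has the form $\eta_{1,g}^2$.

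To identify $\eta_{1,g}$ explicitly, I would fix a reference fiber over a basepoint $b_0\in S^2\setminus\{p_1,p_2\}$ and transport the vanishing cycles back via a Hurwitz system of arcs. The reference fiber carries the covering involution $\iota$ with quotient $\Sigma_g$; the curves $B_0,B_1,\ldots,B_{2g+1}$ are precisely the $\iota$-invariant lifts of a chain of arcs on $\Sigma_g$ joining the four branch points (giving one right-handed Dehn twist each), while $b_{g+1}$ and $b_{g+1}'$ are the two components of the $\iota$-preimage of small loops around the remaining branch pairs, each contributing squared twists because the corresponding local model is of the form $(z_1,z_2)\mapsto z_1^2 z_2^2$ on the branched cover. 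Collecting these twists in the order dictated by the Hurwitz system yields exactly $\eta_{1,g}$.

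The main obstacle is the last identification: setting up the desingularization model carefully enough to justify that the twists appearing in $\eta_{1,g}$ are right-handed, occur with the claimed multiplicities, and are realized by precisely the curves in Figure~\ref{fig:generator} rather than by some other $\iota$-equivariant system. Once this matching is in hand, the fact that the base $S^2$ carries no global monodromy forces $\eta_{1,g}^2$ to be a factorization of the identity in $\mathcal{M}_{2g+1}$, and the whole construction is essentially the branched-cover computation carried out in~\cite{Korkmaz:2001, Yun:05}.
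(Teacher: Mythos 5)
The paper offers no proof of this lemma at all: it is imported verbatim from \cite{Korkmaz:2001,Yun:05}, so there is no internal argument to compare against. Your outline does follow the strategy of those references (project $M(2,g)\to S^2$, note the generic fiber has genus $2g+1$ by Riemann--Hurwitz, locate the two singular values at the images of the horizontal branch components, resolve and perturb, and use the symmetry exchanging $p_1$ and $p_2$ to see the word splits as $\eta_{1,g}\cdot\eta_{1,g}$). That architecture is sound, and the consistency checks (fiber genus, and an Euler characteristic count giving $4g+12=2(2g+6)$ critical points) all work out.

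However, the step you yourself flag as ``the main obstacle'' --- identifying the vanishing cycles as exactly the curves $B_0,\dots,B_{2g+1},b_{g+1},b_{g+1}'$ of Figure~\ref{fig:generator} with the stated multiplicities and signs --- is the entire content of the lemma, and the specifics you do offer for it are not right. A chain of arcs on $\Sigma_g$ joining the four branch points has only three arcs, so its lifts cannot account for the $2g+2$ curves $B_0,\dots,B_{2g+1}$; the count must depend on $g$, and indeed the $B_j$ in Figure~\ref{fig:generator} wind through all the handles of $\Sigma_{2g+1}$. The actual identification in \cite{Korkmaz:2001} (building on Matsumoto and on Gurtas's factorizations) proceeds by showing that the monodromy around each $p_i$ equals the involution of $\Sigma_{2g+1}$ depicted in Figure~\ref{fig:generator} (rotation by $\pi$), and then exhibiting $\eta_{1,g}$ as a positive Dehn-twist factorization of that involution. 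Likewise, the proposed local model $(z_1,z_2)\mapsto z_1^2z_2^2$ is not a map to the base and does not explain the squared twists; those four twists per singular fiber come from resolving the four $A_1$-singularities of the double cover lying on that fiber and perturbing the resulting configuration. As written, the proposal is a plausible road map that defers the essential computation back to the very references the lemma cites.
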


\begin{theorem}[\cite{FS:2004,Yun:08}]
\label{theorem:FS2004}
 Let $K \subset S^3$ be a fibred knot of genus $g$. 
 Then $E(2)_K$, as a genus $(2g+1)$ Lefschetz fibration,
 has a monodromy factorization of the form
\[
 \Phi_K (\eta_{1, g}) \cdot \Phi_K (\eta_{1, g}) \cdot \eta_{1, g} \cdot\eta_{1, g},
\]
 where $\eta_{1, g}^2$ is a monodromy factorization of $M(2,g)$ and
\[
 \Phi_K = \varphi_K\oplus id \oplus id: \Sigma_g \sharp \Sigma_{1} \sharp \Sigma_g \to \Sigma_g \sharp \Sigma_{1} \sharp \Sigma_g
\]
 is a diffeomorphism obtained by using a (geometric) monodromy $\varphi_K$ of $K$ defined by
\[
 S^3 \setminus \nu(K) = (I \times \Sigma_g^1)/ ((1,x) \sim (0, \varphi_K(x))),
\]
 where $\Sigma_g^1$ is an oriented surface of genus $g$ with one boundary component.
\end{theorem}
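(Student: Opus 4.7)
The plan is to realize $E(2)_K$ as a twisted fiber sum of two copies of the genus $(2g+1)$ Lefschetz fibration $M(2,g)$, where the gluing diffeomorphism is $\Phi_K$, and then to assemble the stated monodromy factorization by substituting $\eta_{1,g}\cdot \eta_{1,g}$ for the monodromy factorization of $M(2,g)$ provided by Lemma~\ref{lemma:Yun}.

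First I would set up the geometric decomposition. When $K$ is the unknot, $E(2)\approx M(2,0)\#_{f}M(2,0)=E(1)\#_{f}E(1)$ is the fiber sum along a regular elliptic fiber with $\Phi_K=\mathrm{id}$, recovering the standard factorization $\eta_{1,0}^{4}$ of $E(2)$. For a general fibred knot $K$ of genus $g$, knot surgery along a torus fiber $T\subset E(2)$ replaces a tubular neighborhood $T^{2}\times D^{2}$ by $(S^{1}\times M_K)\setminus \nu(T_m)$, where $M_K$ is the $\Sigma_g^{1}$-bundle over $S^{1}$ with monodromy $\varphi_K$. The structural claim I would establish is that this surgery (a) enlarges the generic fiber of the Lefschetz fibration from $\Sigma_1$ to $\Sigma_g\#_\partial \Sigma_1 \#_\partial \Sigma_g = \Sigma_{2g+1}$ by incorporating a closed-off Seifert surface on either side of the central torus, so that each half of the decomposition becomes $M(2,g)$; and (b) twists the identification of the two halves by the mapping class $\Phi_K=\varphi_K\oplus \mathrm{id}\oplus \mathrm{id}$, with $\varphi_K$ acting on exactly one of the two $\Sigma_g$ summands.

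With the decomposition $E(2)_K\approx M(2,g)\#_{\Phi_K} M(2,g)$ in hand, the factorization follows from Lemma~\ref{lemma:Yun} together with the standard fact that a twisted fiber sum of two Lefschetz fibrations over $S^{2}$ with monodromy factorizations $W_{1}$ and $W_{2}$ and twisting diffeomorphism $f$ admits the monodromy factorization $f(W_{1})\cdot W_{2}$. Taking $W_{1}=W_{2}=\eta_{1,g}\cdot \eta_{1,g}$ and $f=\Phi_K$, and using that the conjugation action of $\Phi_K$ distributes over products of Dehn twists in the mapping class group, one obtains $\Phi_K(\eta_{1,g})\cdot \Phi_K(\eta_{1,g})\cdot \eta_{1,g}\cdot \eta_{1,g}$ as the desired monodromy factorization for $E(2)_K$.

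The principal obstacle in this plan is part (b) of the structural claim: verifying that the twist is precisely $\varphi_K\oplus \mathrm{id}\oplus \mathrm{id}$ in the chosen summand decomposition of $\Sigma_{2g+1}$, rather than some conjugate or otherwise placed copy. This demands a careful comparison between a trivialization of the $\Sigma_g$-bundle $M_K \to S^{1}$ along the meridian direction of $K$ and a trivialization of the normal bundle of $T\subset E(2)$, so as to confirm that the resulting mapping class of $\Sigma_{2g+1}$ preserves the subsurface decomposition $\Sigma_g\#_\partial \Sigma_1 \#_\partial \Sigma_g$ and acts by the identity on the central $\Sigma_1$ and on one of the $\Sigma_g$ summands. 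Once this identification of the twist is secured, the computation above completes the argument.
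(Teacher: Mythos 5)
Your plan is essentially the argument of the cited sources: the paper itself states this theorem without proof, quoting \cite{FS:2004,Yun:08}, and the content of \cite{Yun:08} is exactly the decomposition you describe, namely $E(2)_K \approx M(2,g)\sharp_{\Phi_K}M(2,g)$ together with Lemma~\ref{lemma:Yun} and the rule that a twisted fiber sum with gluing map $f$ has factorization $f(W_1)\cdot W_2$. Be aware, though, that the step you defer as ``the principal obstacle''---showing that knot surgery really yields this twisted fiber sum of two copies of $M(2,g)$ with gluing map precisely $\varphi_K\oplus\mathrm{id}\oplus\mathrm{id}$ on the chosen decomposition $\Sigma_g\sharp\Sigma_1\sharp\Sigma_g$---is not a detail but the entire substance of the cited proof, so as written your proposal is a correct strategy rather than a complete argument.
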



\bigskip

\section{Isomorphic Lefschetz fibrations}\label{section:isom}

 In this section we construct examples of simply connected isomorphic symplectic Lefschetz fibrations with the same generic fiber but coming 
 from a pair of inequivalent fibred knots.
 In~\cite{FS:2004} Fintushel and Stern constructed families of simply connected symplectic $4$-manifolds with the same Seiberg-Witten invariants. Among them, they considered a set of the following symplectic $4$-manifolds
 \[
  \{ Y(2; K_1, K_2):= E(2)_{K_1} \sharp_{id:\Sigma_{2g+1} \to \Sigma_{2g+1}} E(2)_{K_2} \ |\ K_1, K_2 \ \mathrm{are\ genus\ g\ fibred\ knots} \}
\]
 and they showed that
\[
 \mathcal{SW}_{Y(2;K_1, K_2)} = t_K +  t_K^{-1}.
\]
 In~\cite{Yun:08} we found examples such that $Y(2;K, K_1)$ and $Y(2;K, K_2)$ are diffeomorphic even though $K_1$ is not equivalent to $K_2$. In this section we will generalize such a construction. That is,  we will construct infinitely many pairs $(K,K')$ of inequivalent genus $2$ fibred knots such that all of $Y(2; K, K')$'s are mutually diffeomorphic.

 A family of inequivalent knots with the same Alexander polynomials were constructed by several authors. Among them, Kinoshita and Terasaka~\cite{Kinoshita_Terasaka:57} constructed a nontrivial knot with the
 trivial Alexander polynomial by using an operation, so called knot union. After that, Kanenobu constructed infinitely many inequivalent knots $K_{p,q}$ ($p, q \in \mathbb{Z}$) with the same Alexander polynomials~\cite{Kanenobu:86, Kanenobu:86a}. 
 They constructed the examples from the ribbon fibred knot $4_1\#(-4_1^*)$ by applying the Stallings' twist~\cite{Stallings:78} at two different locations repeatedly, where $K^*$ is the mirror image of $K$.

\begin{figure}[htb]
  \includegraphics{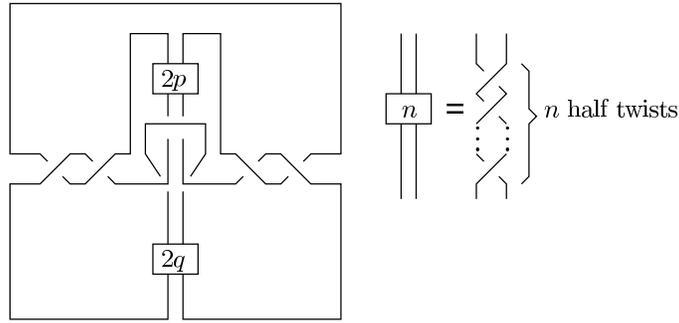}
  \caption{A Kanenobu knot $K_{p,q}$}\label{fig:Kpq}
\end{figure}

 The followings are known to Kanenobu.

\begin{lemma}[\cite{Kanenobu:86}]
\label{lemma:kanenobu} 
 Let $K_{p,q}$ be a Kanenobu knot as in Figure~\ref{fig:Kpq}. Then
\begin{enumerate}
    \item $K_{0,0} = 4_1 \# (- 4_1^*)$
    \item The Alexander matrix of $K_{p,q}$ is $\left(%
    \begin{array}{cc}
        t^2 -3t + 1 & (p-q)t \\
        0 & t^2 - 3t + 1 \\
    \end{array}%
    \right)$
    \item $\Delta_{K_{p,q}}(t) \doteq (t - 3 + t^{-1})^2$
    \item $K_{p,q}$ is a fibred ribbon knot
    \item $K_{p,q} \sim K_{r,s}$ if and only if $(p,q) = (r,s) \text{or} (s,r)$
    \item $K_{p,q}^* \sim K_{-q,-p}$
    \item $K_{p,q}$ is a prime knot if $(p,q) \ne (0,0)$
\end{enumerate}
\end{lemma}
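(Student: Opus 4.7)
The plan is to read each assertion off the diagram in Figure~\ref{fig:Kpq}, which realizes $K_{p,q}$ as the result of applying two Stallings twists, with integer parameters $p$ and $q$ in two disjoint twist regions, to the ribbon fibred knot $4_1 \# (-4_1^*)$.

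Items (1), (4), and (6) are essentially diagrammatic. Setting $p = q = 0$ removes both twists and returns the base knot, which is (1). Stallings' theorem guarantees that a twist along an unknot lying in a fiber surface preserves fiberedness, and a direct check on the ribbon disks of $4_1 \# (-4_1^*)$ shows the same for the ribbon property, yielding (4). For (6), reflecting the entire picture reverses the sign of each twist parameter, and composing with the planar symmetry that interchanges the two twist regions identifies $K_{p,q}^*$ with $K_{-q,-p}$.

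For (2) and (3), I would fix a genus-$2$ Seifert surface $F$ adapted to the two twist regions and compute the Seifert matrix $V$ as a $4 \times 4$ block matrix in which each twist contributes a single controlled entry to a diagonal block. The presentation matrix $tV - V^{T}$ then reduces by obvious row/column operations to the upper-triangular form stated in (2), with the off-diagonal term $(p-q)t$ recording the difference of the two independent twist contributions. Taking determinants and normalizing by a unit of $\mathbb{Z}[t,t^{-1}]$ gives $\Delta_{K_{p,q}}(t) \doteq (t^{2}-3t+1)^{2} \doteq (t-3+t^{-1})^{2}$, which is (3).

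The real obstacles are (5) and (7), since the Alexander polynomial is constant across the family. For (5) I would use an invariant finer than $\Delta_{K}$, most naturally the $\mathbb{Z}[t,t^{-1}]$-module structure given by (2) together with the Blanchfield linking pairing on the Alexander module; the parameter $p-q$ survives as an invariant of this pairing, and the unavoidable $(p,q) \leftrightarrow (s,r)$ ambiguity is forced by the choice of orientation of $K$ combined with the symmetry exchanging the two twist regions. For (7), primality follows once one shows that the Alexander module (or the Blanchfield form) of $K_{p,q}$ is indecomposable for $(p,q) \ne (0,0)$, since a nontrivial connected sum would force a corresponding orthogonal splitting. This indecomposability step is where the bulk of Kanenobu's original argument is invested, and I would expect to need his explicit module-theoretic analysis rather than a purely geometric one.
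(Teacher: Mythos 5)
The paper does not prove this lemma at all: it is quoted verbatim from Kanenobu's papers \cite{Kanenobu:86,Kanenobu:86a} ("The followings are known to Kanenobu"), so there is no in-paper argument to compare against. Judged on its own terms, your sketch of (1), (4), (6) and of the Seifert-matrix computation behind (2)--(3) is reasonable and close to what Kanenobu actually does. The problems are in (5) and (7).

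For (5), the single invariant you propose cannot work. Anything extracted from the Alexander module or its Blanchfield pairing sees, at best, the quantity $|p-q|$ (concretely, the second elementary ideal of the presentation matrix in (2) is $(t^2-3t+1,\ p-q)$). Such an invariant cannot separate $K_{p,q}$ from $K_{p+k,q+k}$; for instance it cannot distinguish the composite knot $K_{0,0}$ from the prime knot $K_{1,1}$. Kanenobu's classification needs a second, independent invariant detecting $p+q$ --- in his papers this role is played by the Jones/HOMFLY-type polynomial computations (indeed the whole point of the family is that these polynomials depend only on $p+q$, so knots with the same $p+q$ must then be separated by the module, and knots with different $p+q$ by the polynomial). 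Your proposal is missing this entire half of the argument.

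For (7), the indecomposability route fails outright. When $p=q\neq 0$ the presentation matrix in (2) is diagonal, so the Alexander module of $K_{p,p}$ is $\bigl(\Lambda/(t^2-3t+1)\bigr)^2$ --- the same decomposable module as that of the genuinely composite knot $K_{0,0}=4_1\#(-4_1^*)$. So indecomposability of the module (or of the Blanchfield form) is simply not available as the mechanism, and even where the module is indecomposable this would only rule out summands with nontrivial Alexander polynomial. Kanenobu's primality proof is geometric, not module-theoretic: he exhibits $K_{p,q}$ for $(p,q)\neq(0,0)$ as the union of two prime tangles and invokes Lickorish's theorem that such a union is a prime knot. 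You would need to replace your step for (7) by an argument of that kind.
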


\begin{figure}[htb]
 \centering
 \includegraphics{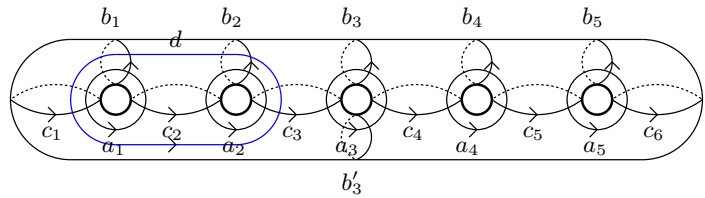}
 \caption{Standard simple closed curves}
 \label{fig:scc}
\end{figure}

 It is not hard to see~\cite{Harer:82} that the monodromy map $\Phi_{K_{p,q}}$ of a Kanenobu knot $K_{p,q}$ is
 \[
   t_d^q \circ t_{c_2}^p\circ t_{a_2} \circ t_{b_2}^{-1} \circ t_{a_1}^{-1} \circ t_{b_1},
 \]
 where $\{a_i, b_i, c_i, d\}$ are simple closed curves in Figure~\ref{fig:scc}.
 Therefore we get that $Y(2;K_{p,q}, K_{r,s})$ has a monodromy factorization of the form
\[
 \Phi_{K_{r,s}}(\eta_{1,2}^2) \cdot \eta_{1,2}^2 \cdot \Phi_{K_{p,q}}(\eta_{1,2}^2) \cdot \eta_{1,2}^2.
\]

\begin{lemma}\label{lemma:key-p}
 For each $k \in \mathbb{Z}_{\ge 0}$ and fixed $p$, $q$, we have following relations:
\begin{eqnarray}
  t_{c_2} &=& \lambda_{t_{c_2}^k(t_{B_2})}(t_{c_2}^{k+1}(t_{B_2})) =
   \lambda_{t_{c_2}^{k+1}(t_{B_3}^{-1})}(t_{c_2}^{k}(t_{B_3})) \label{eqn:key1} \\
  t_d &=&  \lambda_{t_{d}^k(t_{B_4})}(t_{d}^{k+1}(t_{B_4})) =
   \lambda_{t_{d}^{k+1}(t_{B_3})}(t_{d}^{k}(t_{B_3})) \label{eqn:key2}  \\
  t_{c_2} &=& \lambda_{\Phi_{K_{k,q}}(t_{B_3})}(\Phi_{K_{k+1,q}}(t_{B_3})) \label{eqn:key3}\\
  t_d &=& \lambda_{\Phi_{K_{p,k+1}}(t_{B_4})}^{-1}(\Phi_{K_{p,k}}(t_{B_4})). \label{eqn:key4}
\end{eqnarray}
 Therefore
\begin{equation*}
\begin{array}{rclrcl}
 t_{c_2} &\in& G_F(\xi_{p,q} \cdot \xi_{p \pm 1, q}), & t_{c_2} &\in& G_F(\eta_{1,2}^2 \cdot
t_{c_2}(\eta_{1,2}^2)) \\
 t_{d} &\in& G_F(\xi_{p,q} \cdot \xi_{p, q\pm 1}), & t_{d} &\in& G_F(\eta_{1,2}^2 \cdot
t_{d}(\eta_{1,2}^2)).
\end{array}
\end{equation*}

\end{lemma}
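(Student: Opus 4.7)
The plan is to reduce all four identities to a single fact from the mapping class group: for simple closed curves $a, b$ with geometric intersection number $i(a,b) = 1$, one has the ``switch'' identity $t_a(t_b(a)) = b$, equivalently $t_a(b) = t_b^{-1}(a)$, which is just the braid relation $t_a t_b t_a = t_b t_a t_b$ in disguise. Combined with the universal conjugation rule $f \circ t_c \circ f^{-1} = t_{f(c)}$ for any mapping class $f$, this is all the algebra required.

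From Figures~\ref{fig:generator} and~\ref{fig:scc} I would first read off the geometric facts $i(c_2, B_2) = i(c_2, B_3) = i(d, B_3) = i(d, B_4) = 1$, the mutual disjointness of $c_2$ and $d$, and the disjointness of each of $c_2, d$ from the curves $a_1, b_1, a_2, b_2$ appearing in $\varphi_0 := t_{a_2} \circ t_{b_2}^{-1} \circ t_{a_1}^{-1} \circ t_{b_1}$. These imply that $t_{c_2}, t_d$ commute with one another and with $\varphi_0$, and in particular $\Phi_{K_{p,q}}$ fixes the isotopy classes of $c_2$ and $d$. For \eqref{eqn:key1}, set $\alpha_k := t_{c_2}^k(B_2)$; since $t_{c_2}$ fixes $c_2$, $i(c_2, \alpha_k) = 1$, so the switch identity applied to $(\alpha_k, c_2)$ gives $t_{\alpha_k}(t_{c_2}(\alpha_k)) = c_2$, i.e.\ $t_{\alpha_k}(\alpha_{k+1}) = c_2$. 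Conjugating by $t_{\alpha_k}$ then yields $t_{\alpha_k} \circ t_{\alpha_{k+1}} \circ t_{\alpha_k}^{-1} = t_{c_2}$, which is exactly the first half of \eqref{eqn:key1}. The second half and both halves of \eqref{eqn:key2} come from the same switch identity, with inverses and curve-role swaps absorbing the outer $t_{B_3}^{-1}$ and the change $c_2 \leftrightarrow d$, $B_2 \leftrightarrow B_4$.

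For \eqref{eqn:key3} and \eqref{eqn:key4}, the commutativity above produces
\[
 \Phi_{K_{k+1,q}}(B_3) = t_{c_2}\!\bigl(\Phi_{K_{k,q}}(B_3)\bigr), \qquad \Phi_{K_{p,k+1}}(B_4) = t_d\!\bigl(\Phi_{K_{p,k}}(B_4)\bigr),
\]
and since $\Phi_{K_{k,q}}$ fixes $c_2$, we get $i\bigl(c_2, \Phi_{K_{k,q}}(B_3)\bigr) = i(c_2, B_3) = 1$, and symmetrically $i\bigl(d, \Phi_{K_{p,k}}(B_4)\bigr) = 1$. The same switch-and-conjugate argument then produces \eqref{eqn:key3}; the inverse $\lambda^{-1}$ in \eqref{eqn:key4} simply records that the switch identity here is applied to the pair in the opposite order of conjugation. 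The subgroup memberships at the bottom follow at once: in $\xi_{p,q} \cdot \xi_{p\pm 1, q}$ the consecutive factors $\Phi_{K_{p,q}}(t_{B_3})$ and $\Phi_{K_{p\pm 1,q}}(t_{B_3})$ realize $t_{c_2}$ via \eqref{eqn:key3}, and analogously in $\eta_{1,2}^2 \cdot t_{c_2}(\eta_{1,2}^2)$ via \eqref{eqn:key1}; the statements for $t_d$ are symmetric.

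The main obstacle I expect is verifying the disjointness facts in the fiber --- specifically that $c_2$ and $d$ are each disjoint from all of $a_1, b_1, a_2, b_2$, so that $\Phi_{K_{p,q}}$ fixes their isotopy classes. This is a figure-chase rather than a subtle algebraic point, but it is the only place where the specific geometry of the Kanenobu knot enters, and any failure of these disjointness conditions would force extra conjugations by $t_d^q$ or $t_{c_2}^p$ to be tracked through $\Phi_{K_{p,q}}$, breaking the clean form of \eqref{eqn:key3}--\eqref{eqn:key4}.
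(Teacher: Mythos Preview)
Your treatment of \eqref{eqn:key1} and \eqref{eqn:key2} is correct and is exactly what the paper does, just phrased through the switch identity $t_a t_b(a)=b$ for $i(a,b)=1$ rather than the paper's explicit curve computations.

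The argument for \eqref{eqn:key3} and \eqref{eqn:key4}, however, rests on a false geometric premise. In the curve system of Figure~\ref{fig:scc}, $c_2$ is the chain curve sitting between the first and second handles: it meets each of $a_1$ and $a_2$ once. (You can read this off from the $\mathbb{Z}_2$-homology relations used later in the paper, e.g.\ $b_2=c_1+c_2$ together with $\hat\imath(a_j,b_i)=\delta_{ij}$ forces $\hat\imath(c_2,a_1)=\hat\imath(c_2,a_2)=1$.) Likewise $d$ meets $b_2$: from the table~(\ref{eqn:K01}) one has $t_d(\Phi_{K_{0,0}}(B_3))=\Phi_{K_{0,0}}(B_3)$ while $\hat\imath(d,B_3)=1$, so $\hat\imath(d,b_2)=1$. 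Consequently $\varphi_0=t_{a_2}t_{b_2}^{-1}t_{a_1}^{-1}t_{b_1}$ does \emph{not} fix $c_2$ (in $\mathbb{Z}_2$-homology one computes $\Phi_{K_{0,0}}(c_2)=c_2+a_1+a_2$), and your deduction $i(c_2,\Phi_{K_{k,q}}(B_3))=i(c_2,B_3)$ via ``$\Phi_{K_{k,q}}$ fixes $c_2$'' is invalid.

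The repair is exactly what the paper does: one cannot pull $c_2$ through $\varphi_0$, but one can pull it through $t_d^q t_{c_2}^k$ (this part of your disjointness claim is fine). So write $\Phi_{K_{k,q}}(B_3)=(t_d^q t_{c_2}^k)\bigl(\Phi_{K_{0,0}}(B_3)\bigr)$ and then check the single geometric fact $i\bigl(c_2,\Phi_{K_{0,0}}(B_3)\bigr)=1$ directly from the figure; the switch identity then applies to the pair $\bigl(\Phi_{K_{k,q}}(B_3),\,c_2\bigr)$ and your conjugation step goes through unchanged. The same correction, with $i\bigl(d,\Phi_{K_{0,0}}(B_4)\bigr)=1$, handles \eqref{eqn:key4}.
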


\begin{proof}
 It is easy to see the image of a simple closed curve on an oriented surface $\Sigma_5$ under Dehn twists and we get
\begin{eqnarray}
  c_2 &=& (t_{B_2}\circ t_{c_2})(B_2) = (t_{B_3}^{-1}\circ t_{c_2}^{-1})(B_3) \label{eqn:letter1}\\
  d &=& (t_{B_4}\circ t_d)(B_4) = (t_{B_3}\circ t_d)(B_3). \label{eqn:letter2}
\end{eqnarray}

 Equation~(\ref{eqn:key1}) is easily obtained from Equation~(\ref{eqn:letter1}) because
\[
 (t_{B_2}\circ t_{c_2})(t_{B_2}) = t_{c_2} =  (t_{B_3}^{-1}\circ t_{c_2}^{-1})(t_{B_3})
\]
 and, from this, we obtain
\begin{eqnarray*}
 t_{c_2} = t_{t_{c_2}^k(c_2)} &=& \lambda_{t_{c_2}^k}(t_{c_2}) \\
  &=& \lambda_{t_{c_2}^k}( \lambda_{t_{B_2}\circ t_{c_2}}(t_{B_2})) \\
  &=& \lambda_{t_{c_2}^k \circ t_{B_2}\circ t_{c_2}}(t_{B_2}) \\
  &=& \lambda_{t_{c_2}^k\circ t_{B_2}\circ t_{c_2}^{-k} \circ t_{c_2}^{k+1}} (t_{B_2}) \\
  &=& \lambda_{t_{c_2}^k(t_{B_2})} (t_{c_2}^{k+1}(t_{B_2}))
\end{eqnarray*}
 and
\begin{eqnarray*}
 \lambda_{t_{c_2}^{k+1}(t_{B_3}^{-1})}(t_{c_2}^{k}(t_{B_3})) &=& t_{(t_{c_2}^{k+1}(t_{B_3}^{-1})
\circ t_{c_2}^{k})(B_3)} \\
 &=& t_{(t_{c_2}^{k+1} \circ t_{B_3}^{-1} \circ t_{c_2}^{-1})(B_3)} \\
 &=& t_{t_{c_2}^{k+1}(c_2)} = t_{c_2}.
\end{eqnarray*}
 Similarly, we get Equation~(\ref{eqn:key2}) from Equation~(\ref{eqn:letter2}).

 Next we prove Equation~(\ref{eqn:key3}). First, observe that
\[
 (\Phi_{K_{0,0}}(t_{B_3})\circ t_{c_2} \circ \Phi_{K_{0,0}}) (B_3) = c_2
\]
 because $\Phi_{K_{0,0}}(B_3)$ meets with $c_2$ at one point.  Therefore
\begin{eqnarray*}
 \lambda_{\Phi_{K_{k,q}}(t_{B_3})}(\Phi_{K_{k+1,q}}(t_{B_3})) &=&
 t_{(\Phi_{K_{k,q}}(t_{B_3}) \circ \Phi_{K_{k+1,q}})(B_3)} \\
 &=& t_{(t_d^q \circ t_{c_2}^k \circ \Phi_{K_{0,0}}(t_{B_3}) \circ t_{c_2} \circ \Phi_{K_{0,0}})
(B_3)} \\
 &=& t_{(t_d^q \circ t_{c_2}^p)(c_2)} = t_{c_2}.
\end{eqnarray*}

 For equation~(\ref{eqn:key4}), observe that
\[
 (\Phi_{K_{0,0}}(t_{B_4}^{-1})\circ t_{d}^{-1} \circ \Phi_{K_{0,0}})(B_4)=d
\]
 and it implies
\begin{eqnarray*}
 \lambda_{\Phi_{K_{p,k+1}}(t_{B_4})}^{-1}(\Phi_{K_{p,k}}(t_{B_4})) &=&
 t_{(\Phi_{K_{p,k+1}}(t_{B_4}^{-1})\circ \Phi_{K_{p,k}})(B_4)} \\
 &=& t_{(t_d^{k+1} \circ t_{c_2}^p \circ \Phi_{K_{0,0}}(t_{B_4}^{-1}) \circ t_d^{-1} \circ \Phi_{K_{0,0}}) (B_4)} \\
 &=& t_{(t_d^{k+1} \circ t_{c_2}^p) (d)} = t_d.
\end{eqnarray*}
\end{proof}

\begin{lemma}\label{lemma:Y(n)-1}
 For each pair $p, q \in \mathbb{Z}$, we get diffeomorphisms
 \[
   Y(2; K_{p,q}, K_{p+1,q}) \approx Y(2; K_{p+1,q}, K_{p+2,q})
 \]
 and
\[
 Y(2;K_{p,q}, K_{p, q+1}) \approx Y(2; K_{p, q+1}, K_{p, q+2}).
\]
\end{lemma}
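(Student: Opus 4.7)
The plan is to show that the monodromy factorizations of $Y(2; K_{p,q}, K_{p+1,q})$ and $Y(2; K_{p+1,q}, K_{p+2,q})$ are equivalent under Hurwitz moves together with a single global conjugation by $t_{c_2}$; the two symplectic $4$-manifolds are then diffeomorphic by the standard correspondence between symplectic Lefschetz fibrations over $S^2$ and monodromy factorizations.

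First I would write out the monodromy factorizations explicitly via Theorem~\ref{theorem:FS2004}:
\[
W_{p,q} := \xi_{p+1,q} \cdot \xi_{p,q} = \Phi_{K_{p+1,q}}(\eta_{1,2}^2) \cdot \eta_{1,2}^2 \cdot \Phi_{K_{p,q}}(\eta_{1,2}^2) \cdot \eta_{1,2}^2,
\]
and $W_{p+1,q} := \xi_{p+2,q} \cdot \xi_{p+1,q}$ analogously with all indices shifted by one. Since $\Phi_{K_{k+1,q}} = t_{c_2} \circ \Phi_{K_{k,q}}$, we have the factorization identity $\Phi_{K_{k+1,q}}(\eta_{1,2}^2) = t_{c_2}(\Phi_{K_{k,q}}(\eta_{1,2}^2))$, so $W_{p+1,q}$ is essentially $W_{p,q}$ with each $\Phi$-block conjugated by an additional $t_{c_2}$ while the $\eta_{1,2}^2$-blocks are unchanged. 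This suggests that global conjugation by $t_{c_2}$ will furnish the bulk of the transformation, and only the two middle $\eta_{1,2}^2$-blocks need to be restored to their unconjugated form via Hurwitz moves.

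The core of the argument is to carry out this restoration using Lemma~\ref{lemma:key-p}. The strategy is as follows. In $W_{p,q}$, bring the two distinguished factors $\Phi_{K_{p,q}}(t_{B_3})$ and $\Phi_{K_{p+1,q}}(t_{B_3})$ (coming from the third and first $\Phi$-blocks respectively) adjacent via a sequence of Hurwitz moves. Once they are adjacent in the correct order, equation~\eqref{eqn:key3} applied as a Hurwitz substitution trades this pair for $t_{c_2} \cdot \Phi_{K_{p,q}}(t_{B_3})$, depositing a free $t_{c_2}$ factor in the interior of the factorization. Applying equation~\eqref{eqn:key3} a second time with $k = p+1$, this time in the reverse direction, allows the $t_{c_2}$ to be absorbed by the surviving $\Phi_{K_{p+1,q}}(t_{B_3})$ in the first block and turned into $\Phi_{K_{p+1,q}}(t_{B_3}) \cdot \Phi_{K_{p+2,q}}(t_{B_3})$. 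Repeating this for every simple closed curve appearing in $\eta_{1,2}$ shifts the $\Phi$-blocks of $W_{p,q}$ by the desired $t_{c_2}$-conjugation, and any residual $t_{c_2}$ conjugations left on the middle $\eta_{1,2}^2$-blocks are eliminated by the local Hurwitz identity $t_{B_2} \cdot t_{c_2}(t_{B_2}) \sim t_{c_2} \cdot t_{B_2}$ coming from equation~\eqref{eqn:key1}, which shuttles $t_{c_2}$ factors between adjacent blocks and cancels them against global conjugation.

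The main obstacle is the delicate bookkeeping of Hurwitz moves: every time a factor is transported past another it is conjugated, so one must verify that the accumulated conjugations cancel exactly, leaving $W_{p+1,q}$ on the nose (up to the single outer global conjugation by $t_{c_2}$). The analogous statement $Y(2; K_{p,q}, K_{p, q+1}) \approx Y(2; K_{p, q+1}, K_{p, q+2})$ follows by the same strategy with $t_{c_2}$ and equations~\eqref{eqn:key1}, \eqref{eqn:key3} replaced by $t_d$ and equations~\eqref{eqn:key2}, \eqref{eqn:key4}, exploiting the symmetry of the Kanenobu construction under exchange of the two Stallings twist locations.
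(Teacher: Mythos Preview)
Your proposal is on the right track and rests on the same key ingredient as the paper: Lemma~\ref{lemma:key-p} shows $t_{c_2}\in G_F(W_{p,q})$ (resp.\ $t_d$), and once a Dehn twist lies in the monodromy group of a product of identity--factorizations one can Hurwitz--conjugate any sub-block by it. The paper exploits exactly this, but much more economically than you propose: rather than a global conjugation followed by factor--by--factor repairs, it simply swaps the middle two blocks (each has total monodromy $1$) and then, using $t_{c_2}\in G_F$, partially conjugates to reach the $p$--independent canonical form
\[
\Phi_{K_{0,q}}(\eta_{1,2}^2)\cdot \Phi_{K_{0,q}}(\eta_{1,2}^2)\cdot t_{c_2}(\eta_{1,2}^2)\cdot \eta_{1,2}^2,
\]
which immediately gives all the diffeomorphisms at once.

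Your explicit scheme---extracting the single factors $\Phi_{K_{p,q}}(t_{B_3})$ and $\Phi_{K_{p+1,q}}(t_{B_3})$, bringing them adjacent, and ``depositing'' a $t_{c_2}$ via~\eqref{eqn:key3}---is more fragile than it looks. Pulling one letter out of a block destroys the $\lambda=1$ property of that block, so the remaining transports no longer come for free; and the relation~\eqref{eqn:key3} only handles the $B_3$--factor, not the other curves $B_0,B_1,B_2,B_4,B_5,b_3,b_3'$ in $\eta_{1,2}$, so ``repeating this for every simple closed curve'' does not literally go through. All of this bookkeeping is bypassed by invoking the partial--conjugation principle at the block level, which is what equations~\eqref{eqn:key1}--\eqref{eqn:key4} are really there to enable: they are not meant to be applied letter by letter, but to certify $t_{c_2},t_d\in G_F$ so that whole blocks can be conjugated in one stroke.
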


\begin{proof}
 $Y(2;K_{p,q}, K_{p+1, q})$ has a monodromy factorization of the form
\[
 \Phi_{K_{p+1,q}}(\eta_{1,2}^2) \cdot \eta_{1,2}^2 \cdot \Phi_{K_{p,q}}(\eta_{1,2}^2) \cdot \eta_{1,2}^2
\]
 where $\Phi_{p,q} = t_d^q \circ t_{c_2}^p\circ t_{a_2} \circ t_{b_2}^{-1} \circ t_{a_1}^{-1} \circ t_{b_1}$.

 By Equation~(\ref{eqn:key3}) in Lemma~\ref{lemma:key-p},
\begin{eqnarray*}
 t_{c_2} &\in& G_F(\Phi_{K_{p+1,q}}(\eta_{1,2}^2) \cdot \Phi_{K_{p,q}}(\eta_{1,2}^2) \\
 t_{c_2} &\in& G_F(t_{c_2}(\eta_{1,2}^2) \cdot \eta_{1,2}^2).
\end{eqnarray*}
 Therefore we get
\begin{eqnarray*}
 \lefteqn{\Phi_{K_{p+1,q}}(\eta_{1,2}^2) \cdot \eta_{1,2}^2 \cdot \Phi_{K_{p,q}}(\eta_{1,2}^2) \cdot \eta_{1,2}^2} \\
 &\sim& \Phi_{K_{p+1,q}}(\eta_{1,2}^2)  \cdot \Phi_{K_{p,q}}(\eta_{1,2}^2) \cdot t_{c_2}(\eta_{1,2}^2) \cdot \eta_{1,2}^2 \\
 &\sim&  \Phi_{K_{0,q}}(\eta_{1,2}^2)  \cdot \Phi_{K_{0,q}}(\eta_{1,2}^2) \cdot t_{c_2}(\eta_{1,2}^2) \cdot \eta_{1,2}^2.
\end{eqnarray*}
 It implies that, for each fixed $q$, $Y(2; K_{p,q}, K_{p+1,q})$ has isomorphic Lefschetz fibration structures, so that they are all diffeomorphic.

 Similarly, by using Equation~(\ref{eqn:key4}) in Lemma~\ref{lemma:key-p}, 
 we can prove that
\[
 Y(2;K_{p,q}, K_{p, q+1}) \approx Y(2; K_{p, q+1}, K_{p, q+2}).
\]
\end{proof}

\begin{theorem}
\label{theorem:Y(n)-1-0-equiv}
 Any two simply connected symplectic $4$-manifolds in
 \[
  \{ Y(2; K_{p,q}, K_{p+1,q}) \ | \ p, q \in \mathbb{Z} \}
\cup
 \{ Y(2;K_{p,q}, K_{p,q+1}) \ | \ p, q \in \mathbb{Z} \}
 \]
 are mutually diffeomorphic.
\end{theorem}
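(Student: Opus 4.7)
The plan is to reduce the statement to at most one diffeomorphism class per integer index via Lemma \ref{lemma:Y(n)-1}, merge the two families using the Kanenobu swap symmetry of Lemma \ref{lemma:kanenobu}(5), and finally bridge the remaining integer parameter via a Hurwitz manipulation parallel to the one used in the proof of Lemma \ref{lemma:Y(n)-1}.

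First, a trivial induction on $|p-p'|$ (resp.\ $|q-q'|$) based on Lemma \ref{lemma:Y(n)-1} shows that, for each fixed $q$, all manifolds $Y(2; K_{p,q}, K_{p+1,q})$ are mutually diffeomorphic, and for each fixed $p$, all manifolds $Y(2; K_{p,q}, K_{p,q+1})$ are mutually diffeomorphic; call the resulting classes $\mathcal{A}_q$ and $\mathcal{B}_p$ respectively. Second, since $K_{p,q} \sim K_{q,p}$ by Lemma \ref{lemma:kanenobu}(5), and the knot surgery construction depends only on the knot up to equivalence, one has
\[
 Y(2; K_{p,q}, K_{p+1,q}) \approx Y(2; K_{q,p}, K_{q,p+1}),
\]
which gives $\mathcal{A}_q = \mathcal{B}_q$ for every $q \in \mathbb{Z}$ and merges the two families into a single collection $\{\mathcal{A}_q\}_{q\in \mathbb{Z}}$ of potential diffeomorphism classes.

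Third, to prove $\mathcal{A}_q \approx \mathcal{A}_{q+1}$, I would start from the canonical form
\[
 \Phi_{K_{0,q}}(\eta_{1,2}^2) \cdot \Phi_{K_{0,q}}(\eta_{1,2}^2) \cdot t_{c_2}(\eta_{1,2}^2) \cdot \eta_{1,2}^2
\]
of a representative of $\mathcal{A}_q$ derived inside the proof of Lemma \ref{lemma:Y(n)-1}. Since $\Phi_{K_{0,q+1}} = t_d \circ \Phi_{K_{0,q}}$, simultaneous conjugation by $t_d$ converts the first two factors to $\Phi_{K_{0,q+1}}(\eta_{1,2}^2)$ at the price of replacing each of the two $\eta_{1,2}^2$ appearing on the right by $t_d(\eta_{1,2}^2)$. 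The resulting discrepancy would then be absorbed using Equation (\ref{eqn:key2}) of Lemma \ref{lemma:key-p}, which puts $t_d \in G_F(\eta_{1,2}^2 \cdot t_d(\eta_{1,2}^2))$, together with Hurwitz moves exactly parallel to those used to extract $t_{c_2}$ in the proof of Lemma \ref{lemma:Y(n)-1}. An induction on $|q-q'|$ would then conclude the theorem.

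The hard part will be this third step: the Hurwitz bookkeeping needed to propagate the stray $t_d$'s across the Matsumoto factors $\eta_{1,2}^2$ without disturbing the right-handed Dehn twist structure of the monodromy factorization. In particular, one relies on the commutation of $t_{c_2}$ and $t_d$ (which requires $c_2$ and $d$ to be disjoint on $\Sigma_5$, as visible in Figure \ref{fig:scc}) and on a careful tracking of how $t_d$ acts on the individual vanishing cycles $B_j$, $b_{g+1}$, $b_{g+1}'$ that make up $\eta_{1,2}^2$. The first two stages are formal consequences of Lemma \ref{lemma:Y(n)-1} and the Kanenobu symmetry, so all the real work is concentrated here.
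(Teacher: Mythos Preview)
Your first two stages match the paper's proof exactly: Lemma~\ref{lemma:Y(n)-1} collapses the first family to a single class $\mathcal{A}_q$ for each $q$ and the second to a single class $\mathcal{B}_p$ for each $p$, and the Kanenobu swap $K_{p,q}\sim K_{q,p}$ gives $\mathcal{A}_q=\mathcal{B}_q$. Where the two diverge is the bridging step $\mathcal{A}_q\approx\mathcal{A}_{q+1}$. The paper does \emph{not} carry out the Hurwitz manipulation you outline; instead it writes down two further relations, $Y(2;K_{p,p+1},K_{p+1,p+1})\approx Y(2;K_{p+1,p},K_{p+1,p+1})$ (Kanenobu on the first factor) and $Y(2;K_{p,p+1},K_{p+1,p+1})\approx Y(2;K_{p+1,p+1},K_{p+2,p+1})$ (Lemma~\ref{lemma:Y(n)-1}), and then concludes. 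If you trace those two relations they identify an element of $\mathcal{A}_{p+1}$ with an element of $\mathcal{B}_{p+1}$, i.e.\ they re-prove $\mathcal{A}_{p+1}=\mathcal{B}_{p+1}$, which is already your second step shifted by one; they do not by themselves produce a link between $\mathcal{A}_p$ and $\mathcal{A}_{p+1}$. So your instinct that the real content lives in this third step is sound, and the paper's ``Therefore'' is at best very compressed.

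Your proposed route for step three is reasonable but has a wrinkle you should be aware of. After globally conjugating the $\mathcal{A}_q$-canonical form by $t_d$ you obtain
\[
\Phi_{K_{0,q+1}}(\eta_{1,2}^2)\cdot\Phi_{K_{0,q+1}}(\eta_{1,2}^2)\cdot (t_{c_2}t_d)(\eta_{1,2}^2)\cdot t_d(\eta_{1,2}^2),
\]
and to reach the $\mathcal{A}_{q+1}$-form you need to strip the two stray $t_d$'s from the right-hand pair. Equation~(\ref{eqn:key2}) places $t_d$ in $G_F\bigl(t_d(\eta_{1,2}^2)\cdot\eta_{1,2}^2\bigr)$, not in $G_F\bigl(t_{c_2}(\eta_{1,2}^2)\cdot\eta_{1,2}^2\bigr)$; since $d$ meets $B_3$ and $B_4$ while $c_2$ meets $B_2$ and $B_3$, the vanishing cycle $t_d(B_4)$ needed for~(\ref{eqn:key2}) is absent from $t_{c_2}(\eta_{1,2}^2)\cdot\eta_{1,2}^2$. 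One way around this is to pass to the $\mathcal{B}$-canonical form $\Phi_{K_{q,0}}(\eta_{1,2}^2)^2\cdot t_d(\eta_{1,2}^2)\cdot\eta_{1,2}^2$ (obtained ``similarly'' in the proof of Lemma~\ref{lemma:Y(n)-1}) and conjugate by $t_{c_2}$ instead, so that the stray twist to absorb is $t_{c_2}$ and the relevant block is $t_d(\eta_{1,2}^2)\cdot\eta_{1,2}^2$; then you need $t_{c_2}\in G_F\bigl(t_d(\eta_{1,2}^2)\cdot\eta_{1,2}^2\bigr)$, which follows from~(\ref{eqn:key1}) once you check that $t_d$ fixes $B_2$ (it does, as $d$ is disjoint from $B_2$). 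Either way the bookkeeping is exactly the kind you flag, and it is not handled by the paper's displayed relations alone.
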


\begin{proof}
 By Lemma~\ref{lemma:Y(n)-1}, the smooth classification problems of simply connected symplecic $4$-manifolds in
 $\{ Y(2; K_{p,q}, K_{r,s}) \ | \ ( r= p \pm 1, s=q) \textrm{ or } (r=p, s= q \pm 1) \}$
 can be reduced to the smooth classification problems of $4$-manifolds in
 $\{ Y(2; K_{p,p}, K_{p+1,p}), Y(2;K_{p,p}, K_{p,p+1}) \ | \ p \in \mathbb{Z} \}$.
 Then, by Lemma~\ref{lemma:kanenobu}, we have $K_{p,p \pm 1} \sim K_{p\pm 1, p}$ and it implies
\begin{eqnarray*}
 Y(2; K_{p,p}, K_{p + 1,p}) &\approx& Y(2;K_{p,p}, K_{p,p + 1}) \\
 Y(2; K_{p,p+1}, K_{p+1,p+1}) &\approx& Y(2;K_{p+1,p}, K_{p+1,p+1}).
\end{eqnarray*}
 Furthermore, by Lemma~\ref{lemma:Y(n)-1}, we also get 
\[
  Y(2; K_{p,p+1}, K_{p+1,p+1}) \approx Y(2;K_{p+1,p+1}, K_{p+2,p+1}).
\] 
 Therefore, for any $p, q \in \mathbb{Z}$, we have 
\[
 Y(2;K_{p,p}, K_{p, p\pm 1}) \approx Y(2;K_{q,q}, K_{q \pm 1, q})
\]
 and it implies the conclusion.
\end{proof}

\bigskip

\section{Nonisomorphic Lefschetz fibrations}
\label{section:nonisom}

 In this section we investigate some algebraic and graph theoretic properties of $\xi_{p,q} = \Phi_{K_{p,q}}(\eta_{1,2}^2) \cdot \eta_{1,2}^2$ and its monodromy group $G_{\Sigma_5}(\xi_{p,q})$.
 In~\cite{Humphries:79}, Humphries showed that the minimal number of Dehn twist generators of the mapping class group $\mathcal{M}_g$ or $\mathcal{M}_g^1$ is $2g+1$ by using symplectic transvection and modulo two Euler number of a graph.

\begin{definition}
\label{definition:graph}
 Let $\{\gamma_1, \gamma_2, \cdots, \gamma_{2g}\}$ be a set of simple closed curves on $\Sigma_{g}$ which generate $H_1(\Sigma_g;\mathbb{Z}_2)$. 
 Let $\Gamma(\gamma_1, \gamma_2, \cdots, \gamma_{2g})$ be a graph which is defined by
\begin{itemize}
 \item a vertex for each simple closed curve $\gamma_i$, $i=1, 2, \cdots, 2g$
 \item an edge between $\gamma_i$ and $\gamma_j$ if $i(\gamma_i, \gamma_j) = 1$ (mod $2$) where $i(\gamma_i, \gamma_j)$ is the minimum number of intersection between two simple closed curves $\gamma_i$ and $\gamma_j$
 \item no intersections between any two edges.
\end{itemize}
 Let $\gamma$ be a simple closed curve on $\Sigma_5$, then $\gamma =\sum_{i=1}^{2g} \varepsilon_i \gamma_i$ ($\varepsilon_i = 0 \textrm{ \ or \ } 1$) as an element of $H_1(\Sigma_g; \mathbb{Z}_2)$.
 Let $\overline{\gamma} := \cup_{\varepsilon_i = 1} \overline{\gamma_i}$  where $\overline{\gamma_i}$ be the union of all closure of half edges with one end vertex $\gamma_i$.
 We define $\chi_\Gamma(\gamma)$ as the modulo $2$ Euler number $\chi_\Gamma(\overline{\gamma})$.
\end{definition}

\begin{lemma}
\label{lemma:chi=1}
 Let $\Gamma(\gamma_1, \cdots, \gamma_{2g})$ be a graph of simple closed curves $\{\gamma_1, \cdots, \gamma_{2g}\}$ which generate $\mathbb{Z}_2$-vector space $H_1(\Sigma_g; \mathbb{Z}_2)$.
 Let $G_{\Gamma,g}$ be a subgroup of $\mathcal{M}_g$ generated by
\[
 \{t_\alpha \ | \   \alpha \textrm{ is a nonseparating simple closed curve on } \Sigma_g \textrm{ such that } \chi_\Gamma(\alpha)=1 \}.
\]
 Then $G_{\Gamma, g}$ is a nontrivial proper subgroup of $\mathcal{M}_g$. Moreover, if $\beta$ is a non-separating simple closed curve on $\Sigma_g$ with $\chi_\Gamma (\beta) = 0$, then $t_\beta \not\in G_{\Gamma, g}$.
\end{lemma}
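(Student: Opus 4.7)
The plan is to interpret $\chi_\Gamma$ as a quadratic refinement of the mod~$2$ intersection pairing on $H_1(\Sigma_g;\mathbb{Z}_2)$ and show that $G_{\Gamma,g}$ sits inside the stabilizer of this quadratic form, while Dehn twists along curves with $\chi_\Gamma=0$ fail to preserve it. Since $\{\gamma_1,\dots,\gamma_{2g}\}$ generates the $2g$-dimensional $\mathbb{F}_2$-vector space $H_1(\Sigma_g;\mathbb{Z}_2)$, the $\gamma_i$ must form a basis, so the assignment $q(\gamma):=\chi_\Gamma(\gamma)$ is unambiguous on $H_1(\Sigma_g;\mathbb{Z}_2)$. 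First I would verify the key identity
\[
 q(x+y) \;=\; q(x)+q(y)+\langle x,y\rangle_{2} \qquad \text{for all } x,y\in H_1(\Sigma_g;\mathbb{Z}_2),
\]
where $\langle\cdot,\cdot\rangle_{2}$ is the mod $2$ intersection form. This reduces to a direct count of vertices and edges of the half-edge graphs $\overline{\gamma}$: for $S\subseteq\{1,\dots,2g\}$ the Euler characteristic of $\overline{\sum_{i\in S}\gamma_i}$ works out to $|S|-E_{\mathrm{both}}(S)\pmod 2$, and the identity drops out since an edge of $\Gamma$ between $i$ and $j$ records exactly that $\langle\gamma_i,\gamma_j\rangle_{2}=1$. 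Thus $q$ is a quadratic refinement of $\langle\cdot,\cdot\rangle_{2}$.

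Next I would invoke the standard symplectic transvection (Picard--Lefschetz) formula for the action of a Dehn twist on homology,
\[
 (t_\alpha)_{*}(v) \;=\; v+\langle v,\alpha\rangle_{2}\,\alpha \qquad \text{in } H_1(\Sigma_g;\mathbb{Z}_2),
\]
and compute
\[
 q\bigl((t_\alpha)_{*}v\bigr)-q(v)
 \;=\; \langle v,\alpha\rangle_{2}\bigl(q(\alpha)+1\bigr),
\]
using the refinement identity together with $c^{2}=c$ in $\mathbb{F}_2$. Hence $t_\alpha$ preserves $q$ if and only if $q(\alpha)=\chi_\Gamma(\alpha)=1$. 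Since every generator of $G_{\Gamma,g}$ has this property, the induced action of $G_{\Gamma,g}$ on $H_1(\Sigma_g;\mathbb{Z}_2)$ lies in the stabilizer of $q$.

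The remaining assertions follow quickly. Nontriviality: each $\gamma_i$ is a nonseparating simple closed curve (it is nonzero in $H_1$) with $\chi_\Gamma(\gamma_i)=1$, so $t_{\gamma_i}\in G_{\Gamma,g}$. Properness and the ``moreover'' statement: if $\beta$ is a nonseparating simple closed curve with $\chi_\Gamma(\beta)=0$, then $\beta$ represents a nonzero primitive class in $H_1(\Sigma_g;\mathbb{Z}_2)$, so some $v$ satisfies $\langle v,\beta\rangle_{2}=1$; then $q\bigl((t_\beta)_{*}v\bigr)=q(v)+1$, so $t_\beta$ does not preserve $q$ and therefore $t_\beta\notin G_{\Gamma,g}$. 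The existence of such a $\beta$ (needed for properness) is a counting fact: a quadratic refinement of a nondegenerate symplectic form on $\mathbb{F}_2^{2g}$ has either $2^{2g-1}+2^{g-1}$ or $2^{2g-1}-2^{g-1}$ zeros, so for the genera relevant to the paper there are nonzero classes with $q=0$, and each such primitive class is represented by a nonseparating simple closed curve. The main obstacle is getting the quadratic-refinement identity in Step~2 right; once that is in hand, the rest is essentially formal manipulation with the transvection formula.
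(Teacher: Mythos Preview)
Your proof is correct and follows essentially the same approach as the paper's: both show that $\chi_\Gamma$ is preserved under Dehn twists $t_\alpha$ with $\chi_\Gamma(\alpha)=1$ via the transvection formula and an inclusion--exclusion count on the half-edge graphs, then deduce properness and the ``moreover'' clause from the existence of a nonseparating curve with $\chi_\Gamma=0$. Your version packages the computation more cleanly by first isolating the identity $q(x+y)=q(x)+q(y)+\langle x,y\rangle_2$ (i.e.\ recognizing $\chi_\Gamma$ as a quadratic refinement) and then applying it once, whereas the paper carries out the same calculation directly in the two cases $i(c,\gamma)\equiv 0,1\pmod 2$; your hedge ``for the genera relevant to the paper'' is appropriate, since the properness claim tacitly needs $g\ge 2$ (for $g=1$ the unique graph has Arf invariant~$1$ and every nonzero class has $q=1$), a point the paper's ``it is clearly impossible'' also leaves implicit.
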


\begin{proof}
 Let us prove that $G_{\Gamma, g}$ is a nontrivial proper subgroup of $\mathcal{M}_g$. Each element in $H_1(\Sigma_g; \mathbb{Z}_2)$ can be represented by a non-separating simple closed curve on $\Sigma_g$ and $\mathcal{M}_g$ acts transitively on the set of all non-separating simple closed curves on $\Sigma_g$.

 If $c$ is a non-separating simple closed curve on $\Sigma_g$ such that $\chi_\Gamma(c) =1$, then
\begin{equation*}
 t_c(\gamma) = \begin{cases}
                \gamma, & \textrm{\ if \ } i(c, \gamma) = 0 (\textrm{\ mod\ } 2) \\
                c + \gamma, & \textrm{\ if \ } i(c, \gamma) = 1 (\textrm{\ mod\ } 2)
               \end{cases}
\end{equation*}
 and, for the $i(c, \gamma) = 1 \pmod 2$ case,
\[
 \overline{t_c(\gamma)} = \bar{c} \cup \bar{\gamma} \textrm{\ and \ } \ \bar{c} \cap \bar{\gamma} =
\textrm{ odd number of points }.
\]
 So
\begin{equation*}
 \chi_\Gamma( t_c(\gamma) ) =
 \begin{cases}
  \chi_\Gamma(\gamma), & \textrm{\ if \ } i(c, \gamma) = 0 \pmod 2 \\
  \chi_\Gamma(\bar{\gamma}) + \chi_\Gamma(\bar{c}) + \textrm{ odd number}
   \equiv \chi_\Gamma(\gamma),  & \textrm{\ if \ } i(c, \gamma) = 1 \pmod 2.
 \end{cases}
\end{equation*}
 For any $f \in G_{\Gamma, g}$, 
 $f$ is of the form $t_{c_k}^{\epsilon_k} \circ t_{c_{k-1}}^{\epsilon_{k-1}} \circ \cdots \circ t_{c_2}^{\epsilon_2} \circ t_{c_1}^{\epsilon_1}$,
 where each $c_i$ is a non-separating simple closed curve with $\chi_\Gamma(c_i) =1$ and it implies $\chi_\gamma(f(\gamma)) \equiv \chi_\Gamma(\gamma) \pmod 2$.
 Therefore if $G_{\Gamma, g} = \mathcal{M}_g$, then for any non-separating simple closed curves $\gamma$ on $\Sigma_g$ we have to have $\chi_\Gamma(\gamma) = 1$. It is clearly impossible.
 So $G_{\Gamma,g}$ is a nontrivial proper subgroup of $\mathcal{M}_g$.

 If $\beta$ is a non-separating simple closed curve with $\chi_\Gamma(\beta) = 0$, then, for simple closed curve $\gamma$ on $\Sigma_g$ with $i(\beta,\gamma) =1$, we have $\chi_\Gamma(t_\beta(\gamma)) \not\equiv \chi_\Gamma(\gamma) \pmod 2$. Therefore $t_\beta \not\in G_{\Gamma, g}$.
\end{proof}

\begin{lemma}
\label{lemma:pq}
 For any $p, q \in \mathbb{Z}$,
\[
 t_{c_2}, \ t_d \notin G_F(\xi_{p,q}) = G_F(\Phi_{K_{p,q}}(\eta_{1,2}^2) \cdot \eta_{1,2}^2).
\]
\end{lemma}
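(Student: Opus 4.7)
My plan is to apply Lemma~\ref{lemma:chi=1} to a suitably chosen graph $\Gamma$ on $\Sigma_5$. I aim to construct a generating set $\{\gamma_1,\dots,\gamma_{10}\}$ of $H_1(\Sigma_5;\mathbb{Z}_2)$ so that every non-separating vanishing cycle appearing in $\xi_{p,q}=\Phi_{K_{p,q}}(\eta_{1,2}^2)\cdot\eta_{1,2}^2$ has $\chi_\Gamma=1$, while $\chi_\Gamma(c_2)=\chi_\Gamma(d)=0$. Since any separating Dehn twist acts trivially on $H_1(\Sigma_5;\mathbb{Z}_2)$, it automatically preserves $\chi_\Gamma$ and may be absorbed into the subgroup of Lemma~\ref{lemma:chi=1} without loss; the conclusion $t_{c_2},t_d\notin G_F(\xi_{p,q})$ will then follow directly from the fact that $\chi_\Gamma(c_2)=\chi_\Gamma(d)=0$ combined with the non-separating property.

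The first step is choosing the graph. I would start from a variant of the standard symplectic basis $\{a_i,b_i\mid 1\le i\le 5\}$ in Figure~\ref{fig:scc} and compute $\chi_\Gamma$ on each of $B_0,\ldots,B_5$, $b_3$, $b_3'$, $c_2$, and $d$ by reading their $\mathbb{Z}_2$-homology classes off Figures~\ref{fig:generator} and~\ref{fig:scc}. If the naive choice does not yield the desired parity separation, I would refine by a symplectic change of basis (replacing some $\gamma_i$ by $\gamma_i+\gamma_j$) and recompute; this is a finite combinatorial search.

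The second and crucial step is to verify $\chi_\Gamma(\Phi_{K_{p,q}}(v))=1$ for each non-separating vanishing cycle $v$ of $\eta_{1,2}^2$, independently of $p$ and $q$. Using $\Phi_{K_{p,q}}=t_d^q\circ t_{c_2}^p\circ t_{a_2}\circ t_{b_2}^{-1}\circ t_{a_1}^{-1}\circ t_{b_1}$ together with the identity $t_c(\gamma)=\gamma+(c\cdot\gamma)c$ on $H_1(\Sigma_5;\mathbb{Z}_2)$, I would track each $v$ through the factors. The twists along $a_i,b_i$ preserve $\chi_\Gamma$ provided $\chi_\Gamma(a_i)=\chi_\Gamma(b_i)=1$, by Lemma~\ref{lemma:chi=1}. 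For the factors $t_{c_2}^p$ and $t_d^q$, the key observation is that $t_c^2$ acts trivially on $H_1(\Sigma_5;\mathbb{Z}_2)$ for any $c$, so only the parities of $p$ and $q$ matter, and then I would verify that the intermediate image of $v$ has even geometric intersection with both $c_2$ and $d$, so these exponents act as the identity in $\mathbb{Z}_2$-homology.

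The main obstacle is this last verification. The whole plan rests on exhibiting a single graph $\Gamma$ for which all vanishing cycles of $\eta_{1,2}^2$ and their images under every $\Phi_{K_{p,q}}$ land in the $\chi_\Gamma=1$ class while $c_2$ and $d$ remain in the $\chi_\Gamma=0$ class. The symmetric location of $c_2$ and $d$ inside the first $\Sigma_2$ factor of the decomposition $\Sigma_5=\Sigma_2\sharp\Sigma_1\sharp\Sigma_2$ on which $\varphi_{K_{p,q}}$ acts, together with the way the vanishing cycles $B_i$ traverse the gluing loci, should force the intersection parities to be uniform; but executing this check explicitly from the pictures is the heart of the argument.
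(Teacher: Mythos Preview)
Your overall strategy---use Lemma~\ref{lemma:chi=1} with a graph $\Gamma$ so that every non-separating vanishing cycle of $\xi_{p,q}$ has $\chi_\Gamma=1$ while $\chi_\Gamma(c_2)=\chi_\Gamma(d)=0$---is exactly the paper's. The gap is your hope to do this with a \emph{single} graph $\Gamma$, uniform in $(p,q)$, by arguing that the intermediate image $\Phi_{K_{0,0}}(v)$ has even $\mathbb{Z}_2$-intersection with both $c_2$ and $d$. That parity claim is false. In $H_1(\Sigma_5;\mathbb{Z}_2)$ one computes $\Phi_{K_{0,0}}(B_3)=B_3+b_2$ and $\Phi_{K_{0,0}}(B_4)=B_4+a_2$, and then
\[
t_{c_2}\bigl(\Phi_{K_{0,0}}(B_3)\bigr)=\Phi_{K_{0,0}}(B_3)+c_2,\qquad
t_{d}\bigl(\Phi_{K_{0,0}}(B_4)\bigr)=\Phi_{K_{0,0}}(B_4)+d,
\]
so $i\bigl(c_2,\Phi_{K_{0,0}}(B_3)\bigr)\equiv i\bigl(d,\Phi_{K_{0,0}}(B_4)\bigr)\equiv 1\pmod 2$. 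Thus the $\mathbb{Z}_2$-homology classes of $\Phi_{K_{p,q}}(B_i)$ genuinely depend on the parities of $p$ and $q$.

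Worse, this kills the single-graph approach outright rather than merely complicating it. By the computation inside Lemma~\ref{lemma:chi=1}, if $\chi_\Gamma(c_2)=0$ and $i(c_2,\gamma)\equiv 1$, then $\chi_\Gamma(t_{c_2}(\gamma))\not\equiv\chi_\Gamma(\gamma)$. Applied to $\gamma=\Phi_{K_{0,0}}(B_3)$ this forces $\chi_\Gamma\bigl(\Phi_{K_{0,0}}(B_3)\bigr)\ne\chi_\Gamma\bigl(\Phi_{K_{1,0}}(B_3)\bigr)$, so no $\Gamma$ with $\chi_\Gamma(c_2)=0$ can give value $1$ on the vanishing cycles of both $\xi_{0,0}$ and $\xi_{1,0}$. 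The same obstruction with $d$ separates $\xi_{0,0}$ from $\xi_{0,1}$. The paper's fix is precisely to abandon uniformity: it splits into the four parity classes of $(p,q)$ and constructs a \emph{different} graph $\Gamma_1,\Gamma_2,\Gamma_3,\Gamma_4$ for each, checking in each case that $\chi_{\Gamma_i}=1$ on all $B_j$, $b_3$, $b_3'$ and on all $\Phi_{K_{\epsilon_p,\epsilon_q}}(B_j)$, while $\chi_{\Gamma_i}(c_2)=\chi_{\Gamma_i}(d)=0$. Your plan becomes correct once you make this four-way case split; the ``finite combinatorial search'' you allude to has to be carried out separately in each case.
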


\begin{proof}
 We will prove this in four cases.

\medskip

\noindent 
 \textbf{Case 1: $p$ and $q$ are even integers:}
 Let us consider
\[
 \Gamma_1 = \Gamma(\{c_1, a_1, a_2, b_2, a_3, b_3,  a_4, a_5, B_2, B_4\})
\]
 where $\{a_i, b_i, c_i, d_i, B_i\}$  are simple closed curves on $\Sigma_5$ as in Figure~\ref{fig:generator} and in Figure~\ref{fig:scc}.
 Then graph $\Gamma_1$ is given as in Figure~\ref{fig:graph1} and we have following relations in $H_1(\Sigma_5;\mathbb{Z}_2)$:
\begin{eqnarray}
 b_2 &=& c_1 + c_2, \nonumber \\
 b_3 &=& c_1 + c_2 + c_3 = b_3',\nonumber \\
 B_0 &=& a_1 + a_2 + a_3 + a_4 + a_5,\nonumber \\
 B_1 &=& B_2 + a_1 + a_5, \label{eqn:bi} \\
 B_2 &=& B_4 + a_2 + a_4 + c_2 + c_5, \nonumber\\
 B_3 &=& B_4 + a_2 + a_4,\nonumber \\
 B_4 &=& a_3 + c_3  + c_4,\nonumber \\
 B_5 &=& a_3 + b_3 + b_3' = a_3.\nonumber
\end{eqnarray}

\begin{figure}[htb]
 \centering
 \includegraphics{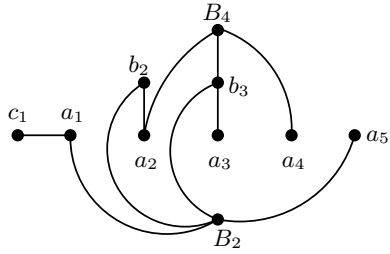}
 \caption{Graph $\Gamma_1$}
 \label{fig:graph1}
\end{figure}

 Therefore we get
\begin{eqnarray*}
 c_2 &=& b_2 + c_1, \\
 c_3 &=& b_2 + b_3, \\
 c_4 &=& B_4 + a_3 + b_2 + b_3, \\
 c_5 &=& B_2 + B_4 + a_2 + a_4 + b_2 + c_1, \\
 c_6 &=& c_1 + c_2 + c_3 + c_4 + c_5 = c_1 + a_2 + a_3 + a_4 + B_2
\end{eqnarray*}
 and it implies that each $c_i$ is in the space spanned by 
 \[\{c_1, a_1, a_2, b_2, a_3, b_3,  a_4, a_5, B_2, B_4 \}.\]
 Since $b_i = c_1 + \cdots + c_i$, each $b_i$ is also in the space spanned by
 \[\{c_1, a_1, a_2, b_2, a_3, b_3,  a_4, a_5, B_2, B_4 \}\] and therefore
 $\{c_1, a_1, a_2, b_2, a_3, b_3,  a_4, a_5, B_2, B_4 \}$ generates $H_1(\Sigma_5; \mathbb{Z}_2)$.

 Furthermore, by drawing figures, we can easily obtain the following relations:
\begin{eqnarray*}
 \Phi_{K_{0,0}}(B_5) &=& B_5, \\
 \Phi_{K_{0,0}}(B_4) &=& B_4 + a_2, \\
 \Phi_{K_{0,0}}(B_3) &=& B_3 + b_2 = B_4 + a_2 + a_4 + b_2, \\
 \Phi_{K_{0,0}}(B_2) &=& B_2 + a_1 + b_2 + a_2, \\
 \Phi_{K_{0,0}}(B_1) &=& B_1 + b_1 + b_2 + a_2 = B_2 + a_1+ a_2 + a_5 + b_1 + b_2, \\
 \Phi_{K_{0,0}}(B_0) &=& B_0 + a_1 + b_1 + a_2 + b_2 = a_3 + a_4 + a_5 + b_1 + b_2.
\end{eqnarray*}

 Therefore
\[
 \chi_{\Gamma_1}(a_i) = \chi_{\Gamma_1}(B_i) = \chi_{\Gamma_1}(\Phi_{K_{0,0}}(B_i)) = 1,  \textrm{\ for \ } i=0,1,2,3,4,5
\]
 and $\chi_{\Gamma_1}(c_1) = \chi_{\Gamma_1}(c_6) = 1$. 
 So we have
\[
 \{t_{B_i}, \Phi_{K_{0,0}}(t_{B_i}), t_{a_j}, t_{b_3}, t_{b_3'},  t_{c_1},  t_{c_6} \ | \ i= 0, 1, 2, 3, 4, 5,\ j=1,2,3,4,5 \}
  \subset G_{\Gamma_1, 5}
\]
 and each generator of the group $G_F(\Phi_{K_{0,0}}(\eta_{1,2}^2)\cdot \eta_{1,2}^2)$ is an element of $G_{\Gamma_1, 5}$. It implies that $G_F(\Phi_{K_{0,0}}(\eta_{1,2}^2)\cdot \eta_{1,2}^2) \le G_{\Gamma_1, 5}$.

 But we have
\[
 \chi_{\Gamma_1}(c_j) = \chi_{\Gamma_1}(d) = 0
\]
 for $j=2,3,4, 5$ and therefore
\[
  t_{c_2},\ t_{c_3},\  t_{c_4},\ t_{c_5},\ t_d \notin G_{\Gamma_1,5}.
\]
 It implies that 
 $t_{c_2}, \ t_d \notin G_F(\Phi_{K_{0,0}}(\eta_{1,2}^2)\cdot \eta_{1,2}^2)$.

 Since $\mathbb{Z}_2$-homology class of $\Phi_{K_{2p, 2q}} (B_i)$ and $\Phi_{K_{0, 0}} (B_i)$ are the
 same for any $p, q \in \mathbb{Z}$, we get
\[
 \chi_{\Gamma_1}(\Phi_{K_{2p, 2q}} (B_i)) = \chi_{\Gamma_1}(\Phi_{K_{0,0}} (B_i))
\]
 for $i=0, 1, 2, 3, 4, 5$.
 It implies that $G_F(\Phi_{K_{2p,2q}}(\eta_{1,2}^2)\cdot \eta_{1,2}^2) \le G_{\Gamma_1, 5}$ and therefore we have
 $t_{c_2}, \ t_d \notin G_F(\Phi_{K_{2p,2q}}(\eta_{1,2}^2)\cdot \eta_{1,2}^2)$.

\medskip

\noindent
 \textbf{Case 2: $p$ is an odd and $q$ is an even integer:}
 Let us consider
\[
 \Gamma_2 = \Gamma(\{a_3, b_3, B_1, B_2, B_3, B_4, d_1, d_2, d_3, d_4\})
\]
 where $\{a_i, b_i, c_i, d_i, B_i\}$ are simple closed curves on $\Sigma_5$ as in Figure~\ref{fig:generator}, Figure~\ref{fig:scc} and Figure~\ref{fig:d}. Then the graph $\Gamma_2$ is as in Figure~\ref{fig:graph2}.

\begin{figure}[htb]
 \centering
 \includegraphics{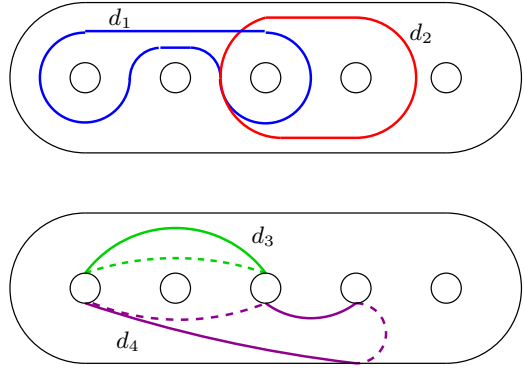}
 \caption{Simple closed curves $d_i$}
 \label{fig:d}
\end{figure}

\begin{figure}[htb]
 \centering
 \includegraphics{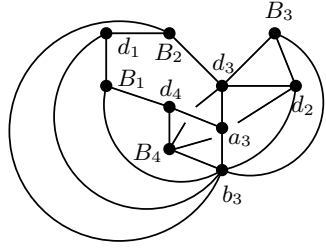}
 \caption{Graph $\Gamma_2$}
 \label{fig:graph2}
\end{figure}

 Since the equations in (\ref{eqn:bi}) are still valid and we have
\begin{eqnarray*}
 d_1 &=& a_1 + a_3, \\
 d_2 &=& a_3 + a_4, \\
 d_3 &=& c_2  + c_3, \\
 d_4 &=&  c_2 + c_3 + c_5 + c_6, \\
 b_3 &=& c_1 + c_2 + c_3 = c_4 + c_5 + c_6 ,
\end{eqnarray*}
 we get
\begin{eqnarray}
 a_1 &=& d_1 + a_3, \nonumber\\
 a_2 &=& B_3 + B_4 + a_3 + d_2, \nonumber \\
 a_3 &=& B_5,  \nonumber \\
 a_4 &=& a_3 + d_2, \nonumber\\
 a_5 &=& B_1 + B_2 + a_3 + d_1, \label{eqn:ai-2} \\
 c_1 &=& b_3 + d_3, \nonumber \\
 c_2 &=& a_3 + b_3 + d_4 + B_4, \nonumber \\
 c_3 &=& a_3 + b_3 + d_3 + d_4 + B_4, \nonumber\\
 c_4 &=& b_3 + d_3 + d_4, \nonumber \\
 c_5 &=& a_3 + b_3 + c_2 + d_4 + B_2 + B_3 + B_4. \nonumber
\end{eqnarray}

 Hence Equations~(\ref{eqn:ai-2}) implies that
 $\{a_3, b_3, B_1, B_2, B_3, B_4, d_1, d_2, d_3, d_4 \}$
 is a basis of $\mathbb{Z}_2$-vector space $H_1(\Sigma_5;\mathbb{Z}_2)$.

 Since $\Phi_{K_{1,0}} = t_{c_2} \circ t_{a_2} \circ t_{b_2}^{-1} \circ t_{a_1}^{-1} \circ t_{b_1}$, 
 using (\ref{eqn:ai-2}) and $b_i = c_1 +  \cdots + c_i$, we get
\begin{eqnarray}
 \Phi_{K_{1,0}}(B_0) &=& B_0 + a_1 + b_1 + a_2 + b_2, \nonumber \\
 \Phi_{K_{1,0}}(B_1) &=& B_1 + b_1 + b_2 + a_2  + c_2 \nonumber \\
 &=& B_1 + B_3 + B_4 + a_3 + d_2 , \nonumber \\
 \Phi_{K_{1,0}}(B_2) &=& B_2 + a_1 + b_2 + a_2 + c_2 \label{eqn:Bi-2} \\
 &=& B_2 + B_3 + B_4 + b_3 + d_1 + d_2 + d_3,  \nonumber \\
 \Phi_{K_{1,0}}(B_3) &=& B_3 + b_2 + c_2 = B_3 + b_3 + d_3, \nonumber \\
 \Phi_{K_{1,0}}(B_4) &=& B_4 + a_2 + c_2 = B_3 + B_4 + b_3 + d_2 + d_4 , \nonumber \\
 \Phi_{K_{1,0}}(B_5) &=& B_5. \nonumber
\end{eqnarray}
  
 A computation of  $\chi_{\Gamma_2}$ shows that
\begin{equation}
 \chi_{\Gamma_2}(B_i) = \chi_{\Gamma_2}(\Phi_{K_{1,0}}(B_i)) = \chi_{\Gamma_2}(b_3) =
\chi_{\Gamma_2}(b_3') = \chi_{\Gamma_2}(a_3) =1
\end{equation}
 for each $i=0,1,2,3,4,5$ and
\begin{equation}
 \chi_{\Gamma_2}(c_1) = \chi_{\Gamma_2}(c_2) = \chi_{\Gamma_2}(a_1) = \chi_{\Gamma_2}(a_2) = \chi_{\Gamma_2}(b_2) = \chi_{\Gamma_2}(d) = 0.
\end{equation}
 Therefore $G_F( \Phi_{K_{1,0}}(\eta_{1,2}^2) \cdot \eta_{1,2}^2) \le G_{\Gamma_2, 5}$ and, since $t_{c_2},\  t_{d} \notin G_{\Gamma_2, 5}$, 
 we get
\[
 t_{c_2},\  t_{d} \notin G_F( \Phi_{K_{1,0}}(\eta_{1,2}^2) \cdot \eta_{1,2}^2).
\]

 Furthermore, since $\Phi_{K_{2p + 1, 2q}}(B_i)$ and $\Phi_{K_{1,0}}(B_i)$ represent the same element in $H_1(\Sigma_2; \mathbb{Z}_2)$, we get  $\chi_{\Gamma_2}(\Phi_{K_{2p + 1, 2q}}(B_i)) = \chi_{\Gamma_2}(\Phi_{K_{1,0}}(B_i)) = 1$ and it implies that 
\[
 t_{c_2},\  t_{d} \notin G_F( \Phi_{K_{2p + 1, 2q}}(\eta_{1,2}^2) \cdot \eta_{1,2}^2)
\]
 for any $p,\ q \in \mathbb{Z}$ because $G_F( \Phi_{K_{2p + 1, 2q}}(\eta_{1,2}^2) \cdot \eta_{1,2}^2) \le G_{\Gamma_2, 5}$.

\medskip

\noindent
 \textbf{Case 3: $p$ is an even and $q$ is an odd integer:}
 We want to find a graph
\[
 \Gamma_3 =\Gamma(\{ \gamma_1, \gamma_2, \cdots, \gamma_{10} \})
\]
 satisfying
\begin{equation}\label{eqn:graph3-1}
 \chi_{\Gamma_3}(B_i) = \chi_{\Gamma_3}(\Phi_{K_{0,1}}(B_i)) = \chi_{\Gamma_3}(b_3) = \chi_{\Gamma_3}(b_3') = \chi_{\Gamma_3}(a_3) =1
\end{equation}
 for $i=0, 1, 2, 3, 4, 5$ and
\begin{equation}\label{eqn:graph3-2}
 \chi_{\Gamma_3}(c_2) = \chi_{\Gamma_3}(d) = 0.
\end{equation}
 Note that we observe the following relations in $H_1(\Sigma_5; \mathbb{Z}_2)$:
 
\begin{equation}
\label{eqn:K01}
 \begin{array}{c|c|c}
 \hline
   & \Phi_{K_{0,0}}(B_i) & \Phi_{K_{0,1}}(B_i) \\ \hline
  B_0 & B_0 +  a_1 + b_1 + a_2 + b_2 & B_0 + a_1 + b_1 + a_2 + b_2 \\ \hline
  B_1 & B_1 + b_1 + b_2 + a_2 & B_1 + b_1 + a_2 + b_2 + d \\ \hline
  B_2 & B_2 + a_1 + b_2 + a_2 & B_2 + a_1 + b_2 + a_2 \\ \hline
  B_3 & B_3 + b_2 & B_3 + b_2 \\ \hline
  B_4 & B_4 + a_2 & B_4 + a_2 + d  \\ \hline
  B_5 & B_5 & B_5  \\ \hline
 \end{array}
\end{equation}

\medskip

 Hence, by Lemma~\ref{lemma:chi=1} and Equations (\ref{eqn:graph3-1}), (\ref{eqn:graph3-2}) and (\ref{eqn:K01}), 
 we need to find a graph $\Gamma_3$ such that
\begin{itemize}
 \item even number of $\{ a_1, b_1, a_2, b_2\}$ have $\chi_{\Gamma_3} = 0$
 \item odd number of $\{ b_1, b_2, a_2 \}$ have $\chi_{\Gamma_3} = 0$
 \item even number of $\{ a_1, b_2, a_2\}$ have $\chi_{\Gamma_3} = 0$
 \item even number of $\{ b_2\}$ have $\chi_{\Gamma_3} = 0$
 \item odd number of $\{ a_2 \}$ have $\chi_{\Gamma_3} = 0$.
\end{itemize}

 Therefore $\{ B_1, B_2, B_3, B_4, b_1, b_2, b_3, a_3 \}$
 might be a subset of $G_{\Gamma_3, 5}$ and we will extend it to a basis of $H_1(\Sigma_5; \mathbb{Z}_2)$ by adding two simple closed curves $d_1$, $d_2$ as in Figure~\ref{fig:d} so that
\[
 \Gamma_3 = \Gamma(\{ B_1, B_2, B_3, B_4, b_1, b_2, b_3, a_3, d_1, d_2 \}).
\]
 Then $\Gamma_3$ is the graph as in Figure~\ref{fig:graph3} and it satisfies Equations~(\ref{eqn:graph3-1}) and (\ref{eqn:graph3-2}).

\begin{figure}[htb]
 \centering
 \includegraphics{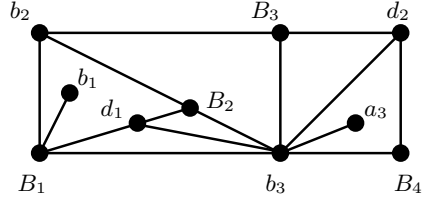}
 \caption{Graph $\Gamma_3$}
 \label{fig:graph3}
\end{figure}

 Therefore $G_F( \Phi_{K_{0,1}}(\eta_{1,2}^2) \cdot \eta_{1,2}^2) \le G_{\Gamma_3, 5}$ and, since $t_{c_2},\ t_{d} \notin G_{\Gamma_3, 5}$, 
 we get
\[
 t_{c_2},\  t_{d} \notin G_F( \Phi_{K_{0,1}}(\eta_{1,2}^2) \cdot \eta_{1,2}^2)
\]
 and
\[
 t_{c_2},\  t_{d} \notin G_F( \Phi_{K_{2p,2q+1}}(\eta_{1,2}^2) \cdot \eta_{1,2}^2)
\]
 for any $p, q \in \mathbb{Z}$.

\medskip

\noindent
 \textbf{Case 4: $p$ and $q$ are odd integers:}
 We want to find a graph
\[
 \Gamma_4 =\Gamma(\{ \gamma_1, \gamma_2, \cdots, \gamma_{10} \})
\]
 satisfying
\begin{equation}\label{eqn:graph4-1}
 \chi_{\Gamma_4}(B_i) = \chi_{\Gamma_4}(\Phi_{K_{1,1}}(B_i)) = \chi_{\Gamma_4}(b_3) = \chi_{\Gamma_4}(b_3') = \chi_{\Gamma_4}(a_3) =1
\end{equation}
 for $i=0, 1, 2, 3, 4, 5$ and
\begin{equation}\label{eqn:graph4-2}
 \chi_{\Gamma_4}(c_2) = \chi_{\Gamma_4}(d) = 0.
\end{equation}
 Note that we observe the following relations  in $H_1(\Sigma_5; \mathbb{Z}_2)$:
 
\begin{equation}\label{eqn:K11}
 \begin{array}{c|c|c}
 \hline
   & \Phi_{K_{0,0}}(B_i) & \Phi_{K_{1,1}}(B_i) \\ \hline
  B_0 & B_0 +  a_1 + b_1 + a_2 + b_2 & B_0 + a_1 + b_1 + a_2 + b_2 \\ \hline
  B_1 & B_1 + b_1 + b_2 + a_2 & B_1  + b_1 + a_2 + b_2 + c_2 + d \\ \hline
  B_2 & B_2 + a_1 + b_2 + a_2 & B_2 + a_1 + b_2 + a_2 + c_2 \\ \hline
  B_3 & B_3 + b_2 & B_3 + b_2 + c_2 \\ \hline
  B_4 & B_4 + a_2 & B_4 + a_2 + c_2 + d  \\ \hline
  B_5 & B_5 & B_5 \\ \hline
 \end{array}
\end{equation}

\medskip

 Hence, by Lemma~\ref{lemma:chi=1} and Equations~(\ref{eqn:graph4-1}), (\ref{eqn:graph4-2}) and (\ref{eqn:K11}), 
 we need to find a graph $\Gamma_4$ such that
\begin{itemize}
 \item even number of $\{ a_1, b_1, a_2, b_2\}$ have $\chi_{\Gamma_4} = 0$
 \item even number of $\{ b_1, b_2, a_2 \}$ have $\chi_{\Gamma_4} = 0$
 \item odd number of $\{ a_1, b_2, a_2\}$ have $\chi_{\Gamma_4} = 0$
 \item odd number of $\{ b_2\}$ have $\chi_{\Gamma_4} = 0$
 \item even number of $\{ a_2 \}$ have $\chi_{\Gamma_4} = 0$.
\end{itemize}

Therefore $\{ B_1, B_2, B_3, B_4, a_1, a_2, b_3, a_3 \}$
 might be a subset of  $G_{\Gamma_4, 5}$ and we will extend it to a basis of $H_1(\Sigma_5; \mathbb{Z}_2)$ by adding two simple closed curves $d_3$, $d_4$ as in Figure~\ref{fig:d} so that
\[
 \Gamma_4 = \Gamma(\{ B_1, B_2, B_3, B_4, a_1, a_2, a_3, b_3, d_3, d_4 \}).
\]

 \begin{figure}[htb]
 \centering
 \includegraphics{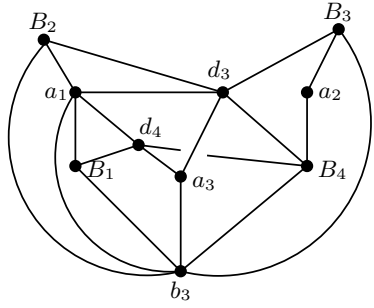}
 \caption{Graph $\Gamma_4$}
 \label{fig:graph4}
\end{figure}

 Then $\Gamma_4$ is the graph as in Figure~\ref{fig:graph4} and it satisfies Equations~(\ref{eqn:graph4-1}) and (\ref{eqn:graph4-2}).
 Therefore $G_F( \Phi_{K_{1,1}}(\eta_{1,2}^2) \cdot \eta_{1,2}^2) \le G_{\Gamma_4, 5}$ and since $t_{c_2},\ t_{d} \notin G_{\Gamma_4, 5}$, 
 we get
\[
 t_{c_2},\  t_{d} \notin G_F( \Phi_{K_{1,1}}(\eta_{1,2}^2) \cdot \eta_{1,2}^2)
\]
 and
\[
 t_{c_2},\  t_{d} \notin G_F( \Phi_{K_{2p+1,2q+1}}(\eta_{1,2}^2) \cdot \eta_{1,2}^2)
\]
 for any $p, q \in \mathbb{Z}$.
\end{proof}

\begin{remark}
 We can doubly check the above statements by using representation of mapping class group in a symplectic group. The following is suggested by S. Humphries~\cite{Humphries:09}: There is a natural map
\[
 \psi_n: \mathcal{M}_5 \xrightarrow{\psi} Sp(10, \mathbb{Z}) \xrightarrow{q_n} Sp(10, \mathbb{Z}/ n \mathbb{Z})
\]
 where, for each $t_\gamma \in \mathcal{M}_5$,
\[
 \psi(t_\gamma) : H_1(\Sigma_5, \mathbb{Z}) \to H_1(\Sigma_5, \mathbb{Z})
\]
 is an integral matrix representation of the mapping class group action on the integral first homology group. After that, we reduce the coefficient of the symplectic group to $\mathbb{Z}/n\mathbb{Z}$ by taking a quotient map $q_n$. It is easy to check that
\[
 \psi_2(t_{c_2}^2) \in \psi_2(G_F(\xi_{p,q}))\ \mathrm{for \ any} \ 
 (p,q) \in \mathbb{Z}^2,
\]
and it implies
\[
 \psi_2(G_F(\xi_{p,q})) = \psi_2(G_F(\xi_{r,s})) \textrm{ \ \ if \ \ } 
 (p,q) \equiv (r,s) \pmod 2.
\]
 An explicit group order computation by using a computer algebra system
 such as GAP~\cite{GAP4} or SAGEMATH~\cite{SAGE} shows that
\begin{eqnarray*}
 \textrm{Order}(\psi_2(G_F(\xi_{p,q}))) &=& 50030759116800, \\
 \textrm{Order}(\langle \psi_2(G_F(\xi_{p,q})\cup \{ t_{c_2}\}) \rangle) &=& 24815256521932800, \\
 \textrm{Order}(\langle \psi_2(G_F(\xi_{p,q})\cup \{ t_d\} ) \rangle ) &=& 24815256521932800, \\
 \textrm{Order}(\psi_2(\mathcal{M}_5)) &=& 24815256521932800,
\end{eqnarray*}
 and it implies
\[
 t_{c_2}, t_d \not\in G_F(\xi_{p,q}) \textrm{ \ for any \ } p,q \in \mathbb{Z}.
\]
\end{remark}

\begin{theorem}
\label{theorem:pq}
 $\xi_{p,q}$ is not equivalent to $\xi_{r,s}$ if $(p,q) \not\equiv (r,s)
  \pmod 2$.
\end{theorem}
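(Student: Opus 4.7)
The plan is to argue by contradiction, using the monodromy group $G_F$ as the Lefschetz-fibration invariant supplied by Lemma~\ref{lemma:mon-Group}. Assume $\xi_{p,q}\cong\xi_{r,s}$ with $(p,q)\not\equiv(r,s)\pmod 2$. After fixing a common generic fiber, Lemma~\ref{lemma:mon-Group} gives $G_F(\xi_{p,q})=G_F(\xi_{r,s})$ as subgroups of $\mathcal{M}_F$. My goal is to force $t_{c_2}$ or $t_d$ into this common subgroup, which directly contradicts Lemma~\ref{lemma:pq}.

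I first treat the immediate-neighbor case $(r,s)=(p+1,q)$. Both $\Phi_{K_{p,q}}(t_{B_3})$ and $\Phi_{K_{p+1,q}}(t_{B_3})$ appear as generators of their respective monodromy factorizations, hence both lie in the common group $G_F(\xi_{p,q})=G_F(\xi_{p+1,q})$. Equation~(\ref{eqn:key3}) of Lemma~\ref{lemma:key-p} then expresses $t_{c_2}$ as the conjugate $\Phi_{K_{p,q}}(t_{B_3})\circ\Phi_{K_{p+1,q}}(t_{B_3})\circ\Phi_{K_{p,q}}(t_{B_3})^{-1}$, so $t_{c_2}\in G_F(\xi_{p,q})$, contradicting Lemma~\ref{lemma:pq}. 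The cases $(r,s)=(p-1,q)$ and $(r,s)=(p,q\pm 1)$ are handled by the same recipe, replacing Equation~(\ref{eqn:key3}) by its variant forms or by Equation~(\ref{eqn:key4}) to extract $t_d$.

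For a general pair with $(p,q)\not\equiv(r,s)\pmod 2$, I rely on the graph-theoretic framework built inside the proof of Lemma~\ref{lemma:pq}. That proof constructs, for each of the four parity classes of $(p,q)$, a graph $\Gamma$ (one of $\Gamma_1,\dots,\Gamma_4$) on a basis of $H_1(\Sigma_5;\mathbb{Z}_2)$ satisfying $G_F(\xi_{p,q})\le G_{\Gamma,5}$ together with $\chi_\Gamma(c_2)=\chi_\Gamma(d)=0$. Fixing the $\Gamma$ associated to the parity of $(p,q)$, the assumed equality $G_F(\xi_{p,q})=G_F(\xi_{r,s})$ forces every Dehn-twist generator $\Phi_{K_{r,s}}(t_{B_l})$ of $\xi_{r,s}$ into $G_{\Gamma,5}$, i.e.\ $\chi_\Gamma(\Phi_{K_{r,s}}(B_l))=1$ for every $l$. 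Since by the tables in Lemma~\ref{lemma:pq} the class of $\Phi_{K_{r,s}}(B_l)$ in $H_1(\Sigma_5;\mathbb{Z}_2)$ differs from $\Phi_{K_{p,q}}(B_l)$ by a combination $\varepsilon_1 c_2+\varepsilon_2 d$ determined purely by the parity differences, the contradiction comes from exhibiting a single $l$ with $\chi_\Gamma(\Phi_{K_{r,s}}(B_l))=0$.

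The main obstacle is that $\chi_\Gamma$ is \emph{not} additive on $H_1(\Sigma_5;\mathbb{Z}_2)$: the subgraph associated to a sum $\gamma_1+\gamma_2$ is the union (not symmetric difference) of the vertex half-edge sets, so the Euler characteristic picks up intersection contributions from common vertices. The key point, and the technical heart of the argument, is that the four graphs $\Gamma_1,\dots,\Gamma_4$ of Lemma~\ref{lemma:pq} were chosen precisely so that introducing an unwanted $c_2$ or $d$ summand to $\Phi_{K_{p,q}}(B_l)$ flips $\chi_\Gamma$ from $1$ to $0$ for at least one index $l$. Verifying this is a finite case analysis over the twelve ordered pairs of distinct parity classes, and runs on exactly the homology data already compiled inside the proof of Lemma~\ref{lemma:pq}, producing the desired contradiction in every case.
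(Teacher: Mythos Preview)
Your general-case strategy is exactly the paper's: fix the graph $\Gamma_i$ attached to the parity class of $(p,q)$ so that $G_F(\xi_{p,q})\le G_{\Gamma_i,5}$, and then exhibit some generator $\Phi_{K_{r,s}}(t_{B_l})$ of $\xi_{r,s}$ with $\chi_{\Gamma_i}(\Phi_{K_{r,s}}(B_l))=0$, forcing $G_F(\xi_{r,s})\not\le G_{\Gamma_i,5}$ and hence $G_F(\xi_{p,q})\ne G_F(\xi_{r,s})$. The paper carries this out by compiling an explicit table listing, for each $\Gamma_i$, which of the $t_{\Phi_{K_{\epsilon,\delta}}(B_j)}$ (for $(\epsilon,\delta)\in\{0,1\}^2$, $j=1,2,3,4$) fail to lie in $G_{\Gamma_i,5}$; reading off that table immediately settles every cross-parity case. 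You state that this verification ``runs on exactly the homology data already compiled inside the proof of Lemma~\ref{lemma:pq}'', and that is true, but Lemma~\ref{lemma:pq} only records $\chi_{\Gamma_i}$ of the $\Phi_{K_{r,s}}(B_l)$ when $(r,s)$ has parity class $i$; the cross-parity $\chi_{\Gamma_i}$-values are precisely what the paper's table supplies and what your write-up leaves as an exercise. So your argument is correct but stops one step short of complete.

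Two smaller remarks. Your warm-up paragraph for immediate neighbors via Equations~(\ref{eqn:key3}) and~(\ref{eqn:key4}) is a pleasant alternative route (not in the paper) for those special cases, but it does not extend to non-adjacent parity pairs, as you correctly note. And your digression on the non-additivity of $\chi_\Gamma$ is unnecessary here: the paper simply computes $\chi_{\Gamma_i}(\Phi_{K_{r,s}}(B_l))$ directly from the $\mathbb{Z}_2$-homology expressions, with no need to track how $\chi_\Gamma$ behaves under adding $c_2$ or $d$.
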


\begin{proof}
 We obtain the following table from the proof of Lemma~\ref{lemma:pq}:
 
\begin{equation*}
 \begin{array}{c|c}
 \hline
   & G_{\Gamma_i,5} \textrm{ \ does not contain \ } \\ \hline
  \Gamma_1 &
  t_{\Phi_{K_{1,0}}(B_j)}, (j=1,2,3,4),
  t_{\Phi_{K_{0,1}}(B_1)}, t_{\Phi_{K_{0,1}}(B_4)}, t_{\Phi_{K_{1,1}}(B_2)},
  t_{\Phi_{K_{1,1}}(B_3)}\\ \hline
  \Gamma_2 & t_{\Phi_{K_{0,0}}(B_j)}, (j=1,2,3,4),
  t_{\Phi_{K_{0,1}}(B_2)}, t_{\Phi_{K_{0,1}}(B_3)}, t_{\Phi_{K_{1,1}}(B_1)},
  t_{\Phi_{K_{1,1}}(B_4)}\\ \hline
  \Gamma_3 & t_{\Phi_{K_{0,0}}(B_1)}, t_{\Phi_{K_{0,0}}(B_4)}, t_{\Phi_{K_{1,0}}(B_2)},
  t_{\Phi_{K_{1,0}}(B_3)}, t_{\Phi_{K_{1,1}}(B_j)}, (j=1,2,3,4)
   \\ \hline
  \Gamma_4 & t_{\Phi_{K_{0,0}}(B_2)}, t_{\Phi_{K_{0,0}}(B_3)}, t_{\Phi_{K_{1,0}}(B_1)},
  t_{\Phi_{K_{1,0}}(B_4)}, t_{\Phi_{K_{0,1}}(B_j)}, (j=1,2,3,4)  \\ \hline
 \end{array}
\end{equation*}

\medskip

 Therefore we get the following statement: 
 $t_{\Phi_{K_{p,q}}(B_j)}$ is not contained in $G_{\Gamma_i, 5}$ if and only if $t_{\Phi_{K_{\epsilon_p,\epsilon_q}}(B_j)}$ is not contained in $G_{\Gamma_i, 5}$, where $\epsilon_p, \epsilon_q \in \{0, 1\}$ and $p \equiv \epsilon_p$, $q \equiv\epsilon_q \pmod 2$.
 It implies that
\[
 \xi_{p,q} \not\sim \xi_{r,s} \textrm{ \ if \ } (p,q) \not\equiv (r, s) \pmod 2.
\]

 For example, if $(p,q) \equiv (0,0)$ and $(r,s) \equiv (1,0) \pmod 2$, then
\[
 t_{\Phi_{K_{p,q}}(B_j)} \not\in G_{\Gamma_2, 5},\ (j=1,2,3,4)
\]
 and $G_F(\xi_{r,s}) \leq G_{\Gamma_2, 5}$. Therefore $t_{\Phi_{K_{p,q}}(B_j)} \in G_F(\xi_{p,q})$ but $t_{\Phi_{K_{p,q}}(B_j)} \not\in G_F(\xi_{r,s})$ for $j=1,2,3,4$. It implies $G_F(\xi_{p,q}) \ne G_F(\xi_{r,s})$ and $\xi_{p,q} \not\sim \xi_{r,s}$.
\end{proof}

\begin{corollary}
 If $p \not\equiv q \pmod 2$, then the knot surgery $4$-manifold $E(2)_{K_{p,q}}$ has at least $2$ nonisomorphic genus $5$ Lefschetz fibration structures.
\end{corollary}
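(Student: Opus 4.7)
The plan is to combine Kanenobu's knot equivalence $K_{p,q} \sim K_{q,p}$ (Lemma~\ref{lemma:kanenobu}(5)) with the monodromy-group invariant of Theorem~\ref{theorem:pq} to exhibit two distinct Lefschetz fibration structures on the same underlying smooth $4$-manifold.

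First, I would observe that the hypothesis $p \not\equiv q \pmod 2$ forces $p \neq q$, so Lemma~\ref{lemma:kanenobu}(5) applies and gives $K_{p,q} \sim K_{q,p}$ as knots in $S^3$. Since the knot surgery construction depends only on the knot equivalence class of $K$, this yields a diffeomorphism
\[
 E(2)_{K_{p,q}} \approx E(2)_{K_{q,p}},
\]
so both $\xi_{p,q}$ and $\xi_{q,p}$ may be viewed as genus five Lefschetz fibration structures on the single $4$-manifold $E(2)_{K_{p,q}}$.

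Next, I would verify that the parity condition in the hypothesis of Theorem~\ref{theorem:pq} is satisfied by the pair $((p,q),(q,p))$. Since exactly one of $p,q$ is even, the reductions mod~$2$ are $(0,1)$ and $(1,0)$ in some order, so $(p,q) \not\equiv (q,p) \pmod 2$. Applying Theorem~\ref{theorem:pq} then shows $\xi_{p,q} \not\sim \xi_{q,p}$, producing two genuinely nonisomorphic Lefschetz fibration structures on $E(2)_{K_{p,q}}$ with the same generic fiber $\Sigma_5$.

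There is no serious obstacle here: the entire argument is an immediate packaging of earlier results. The only subtle point is to make explicit that knot equivalence implies a diffeomorphism of the knot surgery manifolds (a standard consequence of the Fintushel–Stern construction), and that the Lefschetz fibration structures $\xi_{p,q}$ and $\xi_{q,p}$ are defined via the respective geometric monodromies $\Phi_{K_{p,q}}$ and $\Phi_{K_{q,p}}$, which a priori differ even though the knots are equivalent. This latter point is precisely what Theorem~\ref{theorem:pq} detects through the $\chi_{\Gamma_i}$-invariants of the monodromy groups $G_F(\xi_{p,q})$.
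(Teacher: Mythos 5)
Your proposal is correct and follows essentially the same route as the paper: use Kanenobu's equivalence $K_{p,q}\sim K_{q,p}$ to identify $E(2)_{K_{p,q}}$ with $E(2)_{K_{q,p}}$, then invoke Theorem~\ref{theorem:pq} with $(p,q)\not\equiv(q,p)\pmod 2$ to see that $\xi_{p,q}\not\sim\xi_{q,p}$. (The paper additionally records the mirror-image diffeomorphisms $E(2)_{K_{p,q}}\approx E(2)_{K_{-q,-p}}$, but these are not needed for the ``at least $2$'' conclusion.)
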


\begin{proof}
 It follows from Lemma~\ref{lemma:kanenobu}.
 Since $K_{p,q}$ is equivalent to $K_{q,p}$, we get a diffeomorphism $E(n)_{K_{p,q}} \approx E(n)_{K_{q,p}}$. Because $S^1 \times (S^3 \setminus \nu(K)) \approx S^1 \times (S^3 \setminus \nu(K^*))$, we also have $E(n)_K \approx E(n)_{K^*}$.
 So we get that $E(n)_{K_{p,q}} \approx E(n)_{K_{p,q}^*} \approx E(n)_{K_{-q,-p}}$. The last diffeomorphism is $E(n)_{K_{p,q}} \approx E(n)_{K_{q,p}} \approx E(n)_{K_{q,p}^*}\approx E(n)_{K_{-p,-q}}$.
 Therefore we have diffeomorphisms
\[
 E(2)_{K_{p,q}} \approx E(2)_{K_{q,p}} \approx E(2)_{K_{-p, -q}} \approx E(2)_{K_{-q, -p}}
\]
 and 
 $\{ \xi_{p,q}, \xi_{q,p}, \xi_{-p,-q}, \xi_{-q, -p} \}$
 are Lefschetz fibration structures which have diffeomorphic underlying $4$-manifolds.
 But, by Theorem~\ref{theorem:pq}, we know that $\xi_{p,q} \not\sim \xi_{q,p}$ because $(p,q) \not\equiv (q,p) \pmod 2$. Therefore we get a conclusion.
\end{proof}

\begin{remark}
 At this time we do not know how to distinguish each elements in
\[
 \{ \ \xi_{p,q} \ | \ p, q \in \mathbb{Z} \ \}
\]
 up to Lefschetz fibration isomorphism.
\end{remark}

\begin{remark}
 We are interested in the question whether the knot surgery $4$-manifold $E(2)_K$ admits infinitely many nonisomorphic Lefschetz fibrations over $S^2$ with the same generic fiber. In Theorem~\ref{theorem:Y(n)-1-0-equiv} we constructed a family of simply connected genus $5$ Lefschetz fibrations over $S^2$ whose underlying space are all diffeomorphic and they are constructed from a pair of inequivalent prime fibred knots. We expect that they are strong candidates for admitting infinitely many nonisomorphic Lefschetz fibrations. We leave this problem for future research project.
\end{remark}

\bigskip
\bigskip


\begin{thebibliography}{999}

\bibitem{Akaho:06}
 M. ~Akaho, \emph{A connected sum of knots and {F}intushel-{S}tern knot
  surgery on $4$-manifolds}, Turkish J. Math. \textbf{30} (2006), no.~1, 87--93.

\bibitem{Akbulut:02}
 S.~Akbulut, \emph{Variations on {F}intushel-{S}tern knot surgery on
  $4$-manifolds}, Turkish J. Math. \textbf{26} (2002), no.~1, 81--92.

\bibitem{FS:98}
 R.~Fintushel and R.~Stern, \emph{Knots, links, and $4$-manifolds},
 Invent. Math. \textbf{134}(2), 363--400 (1998)

\bibitem{FS_ICM98}
 R.~Fintushel and R.~Stern, \emph{Constructions of smooth {$4$}-manifolds},
  Proceedings of the International Congress of Mathematicians, Extra Vol. II (Berlin, 1998), 443--452.

\bibitem{FS:2004}
 R.~Fintushel and R.~Stern, \emph{Families of simply connected 4-manifolds with the same {S}eiberg-{W}itten invariants}, Topology \textbf{43} (2004), no.~6, 1449--1467.

\bibitem{GAP4}
 The GAP~Group, \emph{{GAP -- Groups, Algorithms, and Programming, Version
  4.4.12}}, 2008.

\bibitem{GS:99}
 R.~Gompf and A.~Stipsicz, \emph{$4$-manifolds and {K}irby calculus}, American  Mathematical Society, Providence, RI, 1999.

\bibitem{Gurtas:2004}
 Y.~Gurtas, \emph{Positive {D}ehn twist expressions for some new involutions in mapping class group}, 2004, arXiv:math.GT/0404310.

\bibitem{Harer:82}
 J.~Harer, \emph{How to construct all fibered knots and links}, Topology
  \textbf{21} (1982), no.~3, 263--280.

\bibitem{Humphries:09}
 S.~Humphries, e-mail communication.

\bibitem{Humphries:79}
 S.~Humphries, \emph{Generators for the mapping class group}, Topology of
  low-dimensional manifolds, Lecture Notes in Math., vol. 722 (1979), Springer, 44--47.


\bibitem{Kanenobu:86}
 T.~Kanenobu, \emph{Infinitely many knots with the same polynomial invariant}, Proc. Amer. Math. Soc. \textbf{97} (1986), no.~1, 158--162.

\bibitem{Kanenobu:86a}
 T. ~Kanenobu, \emph{Examples on polynomial invariants of knots and links},
 Math. Ann. \textbf{275} (1986), no.~4, 555--572.

\bibitem{Kas:80}
 A.~Kas, \emph{On the handlebody decomposition associated to a {L}efschetz
  fibration}, Pacific J. Math. \textbf{89} (1980), no.~1, 89--104.

\bibitem{Kinoshita_Terasaka:57}
 S.~Kinoshita and H.~Terasaka, \emph{On unions of knots}, Osaka Math. J.
 \textbf{9} (1957), 131--153.

\bibitem{Korkmaz:2001}
 M.~Korkmaz, \emph{Noncomplex smooth 4-manifolds with {L}efschetz fibrations}, Int. Math. Res. Notices \textbf{2001} (2001), no.~3, 115--128.


\bibitem{Matsumoto:96}
 Y.~Matsumoto, \emph{Lefschetz fibrations of genus two---a topological
  approach}, Topology and Teichm\"uller spaces (Katinkulta, 1995), World Sci.
  Publishing, River Edge, NJ, 123--148.

\bibitem{Park-Yun:08}
 J.~Park and K.-H. Yun, \emph{Nonisomorphic {L}efshetz fibrations on knot
  surgery $4$-manifolds}, Math. Ann. (2009), http://dx.doi.org/10.1007/s00208-009-0366-0.

\bibitem{SAGE}
 \emph{{SAGE} mathematics software, version 3.4}, http://www.sagemath.org/.

\bibitem{Stallings:78}
 J.~Stallings, \emph{Constructions of fibred knots and links}, Algebraic and
  geometric topology, Part 2, Proc. Sympos. Pure Math., XXXII, Amer. Math. Soc., Providence, R.I., 55--60.

\bibitem{Yun:05}
 K.-H. Yun, \emph{On the signature of a {L}efschetz fibration coming from an
  involution}, Topology Appl. \textbf{153} (2006), no.~12, 1994--2012.


\bibitem{Yun:08}
 K.-H. Yun, \emph{Twisted fiber sums of {F}intushel-{S}tern's knot surgery
 $4$-manifolds}, Trans. Amer. Math. Soc. \textbf{360} (2008), no.~11,
  5853--5868.

\end{thebibliography}
\end{document}